\theoremstyle{plain}
\numberwithin{equation}{section}
\newtheorem{theorem}{Theorem}[section]
\newtheorem{lemma}[theorem]{Lemma}
\newtheorem{definition-lemma}[theorem]{Definition-Lemma}
\newtheorem{proposition}[theorem]{Proposition}
\newtheorem{corollary}[theorem]{Corollary}
\newtheorem{definition}[theorem]{Definition}
\newtheorem{remark}[theorem]{Remark}
\theoremstyle{definition}
\newtheorem{example}{Example}[section]
\newcommand{\Ker}        {{\mathrm {ker}}}
\newcommand{\ad}         {{\mathrm {ad}}}
\newcommand{\Ad}         {{\mathrm {Ad}}}
\newcommand{\pr}         {\mathrm{pr}}
\newcommand{\Ann}        {\mathrm{Ann}}
\newcommand{\w}         {\omega}
\newcommand{\opk}       {\oplus_{(k)}}
\newcommand{\TM}            {T^*M}
\newcommand{\TQ}            {T^*Q}
\newcommand{\gpd}       {\mathcal{G}}
\newcommand{\G}         {\mathbb{G}}
\newcommand{\LG}            {\mathfrak{g}}
\newcommand{\inv}         {{\mathrm{inv}}}
\newcommand{\ca}{[\![}
\newcommand{\cc}{]\!]}
\newcommand{\Lie}        {\mathcal L}
\begin{document}
\title[]
{Poly-symplectic groupoids and poly-Poisson structures}

\author[]{Nicolas Martinez}

\address{Instituto de Matem\'atica Pura e Aplicada,
Estrada Dona Castorina 110, Rio de Janeiro, 22460-320, Brasil }
\email{nicolasm@impa.br}

\address{}
\email{}


\date{}

\begin{abstract}
We introduce poly-symplectic groupoids, which are natural extensions
of symplectic groupoids to the context of poly-symplectic geometry,
and define poly-Poisson structures as their infinitesimal
counterparts. We present equivalent descriptions of poly-Poisson
structures, including one related with AV-Dirac structures. We also
discuss symmetries and reduction in the setting of poly-symplectic
groupoids and poly-Poisson structures, and use our viewpoint to
revisit results and develop new aspects of the theory initiated in
\cite{IMV}.
\end{abstract}

\maketitle


\tableofcontents
\section{Introduction}

Poly-symplectic structures arise in the geometric formulation of
Classical Field Theories in the same way that symplectic structures
appear in the Hamiltonian formalism of classical mechanics
\cite{Gu}. More precisely, poly-symplectic structures are
$\mathbb{R}^k$-valued 2-form, which are closed and satisfy a
nondegeneracy condition, in such a way that they concide with usual
symplectic forms when $k=1$. Poly-symplectic geometry has been
studied in recent years by several authors, including
\cite{Aw,AwGo,LMS,LV,No1}; see
also \cite{FoGo,Ka,LMerS,McNo,R-RSV} for further connections with physics. \\

In recent work \cite{IMV}, D. Iglesias J.C. Marrero and M. Vaquero
introduced a generalization of Poisson structure by considering
the inverse structures of poly-symplectic forms, analogous to the
way Poisson structures are defined from symplectic forms.
In this paper, we give a new viewpoint and study new aspects of the
work in \cite{IMV} by considering a slight variation of their
definition of poly-Poisson structure. Our definiton relies on the
relationship between symplectic groupoids  and Poisson manifolds
\cite{We,CDW}, but now in the setting of {\em poly-symplectic
groupoids}, which are natural extensions of symplectic groupoids to
poly-symplectic geometry.\\

Similarly to symplectic groupoids, {\em poly-symplectic groupoids}
are defined by a poly-symplectic form on a Lie groupoid satisfying a
compatibility condition, which says that the poly-symplectic form is
{\it multiplicative} (in the sense of \eqref{eq:mult} below). One of
the main properties of symplectic groupoids is that they are the
global versions of Poisson structures (see \cite{We,CDW}), that is,
the manifold of objects of a symplectic groupoid is endowed with a
Poisson structure whose corresponding Lie algebroid is isomorphic to
the Lie algebroid of the groupoid. Moreover, the Poisson structure
is uniquely determined by the condition that the target map is a
Poisson morphism. Starting with a poly-symplectic groupoid, the
corresponding infinitesimal geometric structure is what we identify
and call {\it poly-Poisson structure}. In other words, the
poly-Poisson structures we introduce here relate to poly-symplectic
groupoids exactly in the same way that Poisson structures relate to
symplectic groupoids. A similar idea in the context of
multi-symplectic geometry (see \cite{CIL,CIL1}) is studied in \cite{BCI}.\\

The notion of $k$-poly-Poisson structure arising in this way is
slightly less general than the one given in \cite{IMV}, but contains
the essential examples of the theory. Moreover, for $k=1$, the
notion agrees with ordinary Poisson structures (in contrast with the
more general definition of \cite{IMV}). From our viewpoint to
poly-Poisson structures, we will revisit some results in \cite{IMV}
and extend known facts about Poisson structures, e.g., concerning
their underlying Lie algebroids and foliations. Also, following the
description of Poisson structures as particular cases of Dirac
structures \cite{Co}, we discuss an analogous picture for
poly-Poisson structures.  In this case, however, Dirac structures
are not enough, and we must consider AV-Courant algebroids and a
suitable extension of {AV-Dirac structures}, as in \cite{LiB}.
\\

Poly-symplectic manifolds $M$ equipped with symmetries given by a
Lie group $\G$ induce, under suitable regularity conditions, a
quotient poly-Poisson structure on the manifold $M/\G$. In order to
find poly-symplectic groupoids integrating such quotients, we need
to discuss some aspects of hamiltonian actions and Marsden-Weinstein
reduction in poly-symplectic geometry, see e.g. \cite{Gu,MR-RSV}.
This allows us to
extend some constructions in \cite{MiWe,FOR} and \cite{BC2}, and
show that the symmetries $\G$ of an integrable poly-Poisson manifold
can be lifted to hamiltonian symmetries of its integrating
(source-simply-connected) poly-symplectic groupoid, and that its
poly-symplectic reduction at level zero is a poly-symplectic
groupoid integrating the quotient poly-Poisson structure on $M/\G$.
\\

There are several aspects of the approach to higher Poisson
structures considered in this paper that we plan to pursue in future
work, including the study of normal forms (see \cite{Aw,Ma} and the
more recent work in \cite{FoGo}), the geometry of the corresponding
higher versions of Dirac structures, and the potential connections
with Field Theory.



This paper is organized as follows: In Section \ref{sec:poly}  we
introduce  poly-symplectic groupoids.
The key result of this section, which generalizes \cite[Prop.
4.1]{BCI}, is Proposition \ref{Prop:nondeg. IM-form}, where we
obtain the relation between global and infinitesimal objects.
Poly-Poisson structures are defined in Section \ref{sec:ppois},
where we discuss their Lie algebroid structure, the underlying
foliation, together with their relation with poly-symplectic
groupoids via integration.
Poly-Poisson structures are illustrated with some examples from
\cite{IMV}. At the end we give a different way to describe
poly-Poisson structures related to AV-Dirac structures \cite{LiB}.
Section 4 is devoted to the study of symmetries of poly-Poisson
structures and hamiltonian actions on poly-symplectic manifolds, see
Theorem \ref{Thm:PPred} and Prop.~\ref{Prp:Reduction}. Finally,
applying the hamiltonian
reduction, we describe integrations of quotients of an integrable poly-Poisson manifold. \\

\noindent{\em Acknowledgments}: I would like to thanks Henrique Bursztyn for 
enlightening and helpful discussions and Juan Carlos Marrero and David Iglesias for 
their comments on this notes. This work was supported by a fellowship from CNPq.\\

\noindent {\it Notation}: Lie algebroids will be denoted by $A\to
M$, with anchor map $\rho:A\to TM$ and bracket $[\cdot, \cdot ]$.
For a Lie groupoid $\gpd$ over $M$, the source and target maps will
be $s,t:\gpd \to M$, $\epsilon: M\to \mathcal{G}$ denotes the unit
map, $\mathrm{inv}: \mathcal{G}\to \mathcal{G}$ is the inversion
map, and the groupoid multiplication is $m:\mathcal{G}_{(2)}\to
\mathcal{G}$, where the space of composable arrows is
$\mathcal{G}_{(2)}:=\mathcal{G}\times_{s,t} \mathcal{G}=\{(g,h)\in
\gpd\times \gpd |t(h)=s(g)\}$. The right and left translation on the
groupoid are $R_g,L_g$, respectively, for $g\in \gpd$.

For a vector space $V$, we will denote by $\oplus_{(k)}V$ the
$k$-fold direct sum of $V$, or equivalently, the space $V\otimes
\mathbb{R}^k$. On vector spaces we will use two different
annihilator spaces. For a vector subspaces $W$ of a vector space
$V$, we will denote by $\Ann(W)$ the space of elements on $V^*$
vanishing on $W$. For any subspace $S$ of $\opk V^*$, $S^o$ stands
for the space of elements on $V$ which annihilate the elements of
$S$, i.e $S^\circ=\{v\in V| \alpha(v)=0 $ for all $\alpha \in S\}$.
This notation will be used, more generally, for vector bundles $E\to
M$ rather than vector spaces.


The coadjoint action $\Ad^*:\G \to \mathrm{End}(\LG^*)$ of a Lie
group $\G$ on the dual of its Lie algebra $\LG$ induces a diagonal
coadjoint action of $\G$ on the product $\LG_{(k)}^*$, and we keep
the notation $\Ad^*$ to this action, i.e.,
$\Ad_g^*(\zeta_1,\dots,\zeta_k)=(\Ad_g^*\zeta_1,\dots,\Ad_g^*\zeta_k)$.

\section{Poly-symplectic groupoids}\label{sec:poly}

In this section we will recall the concept of poly-symplectic
manifold (see e.g. \cite{Gu,IMV,Aw}) and introduce poly-symplectic
groupoids, which will guide us towards poly-Poisson structures.

\subsection{Poly-symplectic structures}\label{subsec:polysymp}

A {\it $k$-poly-symplectic  form} on a manifold $M$ is a an
$\mathbb{R}^k$-valued differential form $\w\in
\Omega^2(M,\mathbb{R}^k)$ which is closed and nondegenerate, in the
sense that the induced bundle map
\begin{equation}\label{eq:wmap}
\w^\flat : TM \to T^*M\otimes \mathbb{R}^k
\end{equation}
is injective ($\Ker(\w)=\{0\}$). Writing $\w$ in terms of its
components, $\w=(\w_1,\ldots,\w_k)$, it is poly-symplectic if and
only if each $\w_j\in \Omega^2(M)$ is closed and
$$
\cap_{j=1}^k \Ker (\w_j)=\{0\}.
$$


One way to obtain examples of poly-symplectic structures is the
following. Let $M$ be a manifold endowed with $k$ surjective,
submersion maps $p_j:M\to M_j$, such that $\cap_{j=1}^k \Ker
(dp_j)=\{0\}$. If each $M_j$ is equipped with a
$l_j$-poly-symplectic form $\w_j$, then
$$
\w=(p_1^*\w_1,\dots ,p_k^*\w_k)
$$
is an $l$-poly-symplectic form on $M$, where $l=l_1+\ldots+l_k$. In particular, if $(M_j,\w_j)$ is an
$l_j$-poly-symplectic manifold, $j=1,\ldots,k$, this construction
endows $M:=M_1\times \dots \times M_k$ with an $l$-poly-symplectic
structure, for $l=l_1+\ldots+l_k$. This shows that the product of
$k$ symplectic manifolds naturally carries a $k$-poly-symplectic
structure.

The following is a particular case of interest in classical field
theory \cite{Gu}:

\begin{example}\label{Ex:polysymplectic} ($k$-covelocities on a manifold)
Recall that any cotangent bundle $T^*Q$ has a canonical symplectic
form $\w_{can}$. The manifold of $k$-covelocities is the Whitney sum 
$$
\opk \TQ=T^*Q\oplus \overset{(k}{\cdots} \oplus T^*Q,
$$
which is equipped with the natural projections $\pr_j:\opk \TQ \to
T^*Q$. It is clear that $\cap_{j=1}^k \Ker (d\pr_j) =\{0\}$, and
$$
\w:=(\pr_1^*\w_{can},\dots ,\pr_k^*\w_{can})\in \Omega^2(\opk
\TQ,\mathbb{R}^k)
$$
is a $k$-poly-symplectic form.
\end{example}

Other examples of poly-symplectic structures are discussed e.g. in
\cite{Gu,IMV,No}.

\subsection{Multiplicative forms and poly-symplectic groupoids}

We now consider poly-symplectic structures on Lie groupoids. Let
$\mathcal{G}$ be a Lie groupoid over $M$.

A differential form $\theta\in \Omega^r(\mathcal{G})$ is called {\it
multiplicative} if it satisfies
\begin{equation}\label{eq:mult}
m^*\theta = \pr_1^*\theta + \pr_2^*\theta,
\end{equation}
where $\pr_i:\mathcal{G}\times_{s,t} \mathcal{G}\to \mathcal{G}$ are
the projection maps. Note that condition \eqref{eq:mult} still makes
sense for $\mathbb{R}^k$-valued forms
$\theta=(\theta_1,\ldots,\theta_k)$, and it simply says that each
component $\theta_i$ is multiplicative.

Recall that a {\it symplectic groupoid} is a Lie groupoid
$\mathcal{G}\rightrightarrows M$ endowed with a multiplicative
symplectic form $\omega\in \Omega^2(\mathcal{G})$, see e.g.
\cite{CDW,We}. A direct generalization leads to

\begin{definition}
A {\it $k$-poly-symplectic  groupoid} is a Lie groupoid
$\mathcal{G}\rightrightarrows M$ together with a $k$-poly-symplectic
form $\omega= (\omega_1,\ldots,\omega_k) \in
\Omega^2(\mathcal{G},\mathbb{R}^k)$ satisfying \eqref{eq:mult}. More
explicitly, each $\omega_j\in \Omega^2(\mathcal{G})$ is closed,
multiplicative, and $\cap_{j=1}^k \Ker(\omega_j)=\{0\}$.
\end{definition}

Suppose that $\mathcal{G}_j\rightrightarrows M_j$ are
$l_j$-poly-symplectic groupoids, $j=1,\ldots,k$. As discussed in
Section~\ref{subsec:polysymp}, we can verify that if a Lie groupoid
$\mathcal{G}$ is equipped with surjective submersions $p_j:
\mathcal{G}\to \mathcal{G}_j$, $j=1,\ldots,k$, which are {\it
groupoid morphisms} and satisfy $\cap_j \Ker(dp_j)=\{0\}$, then
$\omega=(p_1^*\omega_1,\ldots,p_k^*\omega) \in
\Omega^2(\mathcal{G},\mathbb{R}^k)$ makes $\mathcal{G}$ into an
$l$-poly-symplectic groupoid, for $l=l_1+\ldots+l_k$. Here we use
the fact that the pullback of a multiplicative form by a groupoid
morphism is again multiplicative. In particular, we have:

\begin{proposition}\label{prop:prod}
The direct product of symplectic groupoids
$(\mathcal{G}_j,\omega_j)$, $j=1,\ldots,k$, naturally carries a
multiplicative $k$-poly-symplectic structure given by
$$
\omega=(\pr_1^*\omega_1,\ldots,\pr_k^*\omega_k),
$$
where $\pr_j: \mathcal{G}_1\times\ldots\times \mathcal{G}_k\to
\mathcal{G}_j$ is the natural projection.
\end{proposition}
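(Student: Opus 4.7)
The plan is to deduce Proposition \ref{prop:prod} directly from the general principle stated in the paragraph immediately preceding it, so the work reduces to verifying the hypotheses of that principle for the natural projections out of the direct product groupoid.

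First, I would recall that the direct product $\mathcal{G}:=\mathcal{G}_1\times\ldots\times\mathcal{G}_k$ inherits a Lie groupoid structure over $M=M_1\times\ldots\times M_k$, with structure maps defined componentwise from the structure maps of each factor. With this structure, each projection $\pr_j:\mathcal{G}\to\mathcal{G}_j$ is tautologically a surjective submersion and a groupoid morphism (all structural identities are componentwise). Under the canonical identification $T_{(g_1,\ldots,g_k)}\mathcal{G}\cong T_{g_1}\mathcal{G}_1\oplus\ldots\oplus T_{g_k}\mathcal{G}_k$, the differential $d\pr_j$ is the projection onto the $j$-th summand, and hence $\cap_j \Ker(d\pr_j)=\{0\}$.

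Next, since each $(\mathcal{G}_j,\omega_j)$ is a symplectic groupoid (i.e., a $1$-poly-symplectic groupoid), the general construction recalled before the proposition immediately yields that
$$
\omega=(\pr_1^*\omega_1,\ldots,\pr_k^*\omega_k)\in\Omega^2(\mathcal{G},\mathbb{R}^k)
$$
is an $l$-poly-symplectic form with $l=1+\ldots+1=k$, and that each $\pr_j^*\omega_j$ is multiplicative because pullback of a multiplicative form under a groupoid morphism is again multiplicative. Thus $\omega$ is a multiplicative $k$-poly-symplectic form on $\mathcal{G}$.

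For completeness I would spell out the nondegeneracy check explicitly: given $v=(v_1,\ldots,v_k)\in T_{(g_1,\ldots,g_k)}\mathcal{G}$ with $v\in\cap_j \Ker(\pr_j^*\omega_j)$, for every $w=(w_1,\ldots,w_k)$ one has $\pr_j^*\omega_j(v,w)=\omega_j(v_j,w_j)=0$, and letting $w_j$ range over $T_{g_j}\mathcal{G}_j$ the nondegeneracy of $\omega_j$ forces $v_j=0$; hence $v=0$. There is no substantive obstacle here: the statement is essentially the specialization of the preceding general construction to the case where each $p_j$ is the canonical projection out of a product, with all $l_j=1$, and the only point requiring a line of verification is this componentwise nondegeneracy.
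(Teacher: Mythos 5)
Your proposal is correct and follows exactly the route the paper takes: the proposition is stated as an immediate specialization of the preceding general construction (pullback under surjective submersive groupoid morphisms with $\cap_j\Ker(dp_j)=\{0\}$, each $l_j=1$), with multiplicativity coming from the fact that pullback by a groupoid morphism preserves multiplicativity. Your explicit componentwise nondegeneracy check is a harmless elaboration of what the paper leaves implicit.
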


More conceptually, multiplicative poly-symplectic forms are very
special cases of multiplicative forms with values in
representations, as in \cite{CSS}. Given a Lie groupoid
$\mathcal{G}\rightrightarrows M$ and a vector bundle $E\to M$,
consider the pullback bundle $t^*E\to \mathcal{G}$. An {\it
$E$-valued $r$-form} on $\mathcal{G}$ is an element $\theta \in
\Omega^r(\mathcal{G},t^*E)$. If $E$ is a representation of
$\mathcal{G}$ (see \cite{Mc}), we say that $\theta \in
\Omega^r(\mathcal{G},t^*E)$ is {\it multiplicative} if for all
composable arrows $(g,h)\in \mathcal{G}\times_{s,t}\mathcal{G}$ we
have
\begin{equation}\label{Eq:E-mult.}
(m^*\theta)_{(g,h)}=\pr_1^*\theta+g\cdot(\pr_2^*\theta),
\end{equation}
where $m,\pr_1,\pr_2$ are as in \eqref{eq:mult}. It is clear that
for the trivial bundle $E=\mathbb{R}^k\times M$, equipped with the
trivial representation, this recovers the notion of multiplicative
$\mathbb{R}^k$-valued forms previously discussed.

For later use, we observe the $E$-valued version of the equations in
\cite[Lemma~3.1(i)]{BCWZ}:
\begin{lemma}\label{Lem:E-mult.prop.}
If $\theta\in \Omega^{k}(\mathcal{G},t^*E)$ is multiplicative then
\begin{equation}\label{Eq:E-mult.prop.}
\epsilon^*\theta=0,\;\;  \mbox{and } \;\; \theta_g=-g\cdot
(\mathrm{inv}^*\theta_{\mathrm{inv}(g)})
\end{equation}
for all $g\in \mathcal{G}$.
\end{lemma}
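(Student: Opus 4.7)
The plan is to deduce both identities by pulling back the multiplicativity relation \eqref{Eq:E-mult.} along two natural maps into $\mathcal{G}_{(2)}$, mimicking the standard argument for ordinary multiplicative forms and checking that the representation $E$ does not spoil things.

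For the first identity, consider the diagonal unit map $\Delta : M \to \mathcal{G}_{(2)}$, $x \mapsto (\epsilon(x),\epsilon(x))$. The groupoid axioms give $m\circ\Delta = \epsilon$, so pulling back \eqref{Eq:E-mult.} along $\Delta$ produces $\epsilon^*\theta$ on the left. On the right, $\pr_i\circ\Delta = \epsilon$, and since the action of $\epsilon(x)$ on $E_x$ is trivial by the axioms of a representation, the action term contributes just $\epsilon^*\theta$, yielding $2\epsilon^*\theta$. Equating the two sides gives $\epsilon^*\theta = 2\epsilon^*\theta$, hence $\epsilon^*\theta = 0$.

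For the second identity, define $\phi : \mathcal{G}\to \mathcal{G}_{(2)}$ by $\phi(g) = (g,\mathrm{inv}(g))$; this is well defined since $t(\mathrm{inv}(g))=s(g)$. The identity $m\circ\phi = \epsilon\circ t$ yields
\begin{equation*}
\phi^*m^*\theta = t^*\epsilon^*\theta = 0
\end{equation*}
by the first part. Pulling back the right-hand side of \eqref{Eq:E-mult.} along $\phi$, and using that $\pr_1\circ\phi = \mathrm{id}$ and $\pr_2\circ\phi = \mathrm{inv}$, we obtain at each $g$ the expression $\theta_g + g\cdot(\mathrm{inv}^*\theta)_g$, where the action $g\cdot$ converts the $s^*E = (t\circ\mathrm{inv})^*E$-valued form $\mathrm{inv}^*\theta$ into a $t^*E$-valued one. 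Setting this to zero gives the claimed formula.

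There is no real obstacle here; the only subtlety is bookkeeping for the target bundles $t^*E$, $s^*E$, and the compatibility of the groupoid action with pullback, together with the representation axiom $\epsilon(x)\cdot v = v$ for $v\in E_x$, which is precisely what makes the action term collapse in the unit calculation.
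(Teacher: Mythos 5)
Your proof is correct and follows essentially the same strategy as the paper: pulling back the multiplicativity identity \eqref{Eq:E-mult.} along the map $g\mapsto (g,\mathrm{inv}(g))$ and using $m\circ(\mathrm{Id}\times\mathrm{inv})=\epsilon\circ t$. The only (inessential) difference is that you obtain $\epsilon^*\theta=0$ from a separate pullback along the unit diagonal $x\mapsto(\epsilon(x),\epsilon(x))$, whereas the paper extracts it by specializing the same inversion-pullback identity at units $g=\epsilon(m)$.
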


\begin{proof}
Define the map $(Id\times \inv)(g):=(g,g^{-1})$ from $\gpd$ to $\gpd
_{(2)}$. If we apply the pull-back of $(Id\times \inv)$ to Equation
\eqref{Eq:E-mult.} and recall that $\epsilon \circ t = m\circ (Id
\times \inv)$, we obtain:
\begin{align*}
t^*\epsilon^ *\theta_{\epsilon(t(g))}&=(Id\times
\inv)^*(m^*\theta)_{(g,g^{-1})}=
\theta_ g+(Id\times \inv)^*(g\cdot (pr_2^*\theta_{g^{-1}}))\\
&=\theta_ g+g\cdot ((Id\times \inv)^*pr_2^*\theta_{g^{-1}}).
\end{align*}
Therefore
\begin{equation}\label{eq:lem}
t^*\epsilon^ *\theta_{\epsilon(t(g))}=\theta_
g+g\cdot(\inv^*\theta_{g^{-1}}).
\end{equation}
 If in particular we fix
$g=\epsilon(m)$ for some $m\in M$ and take the pull-back by the unit
map in \eqref{eq:lem}, we conclude that $\epsilon^*\theta=0$. Using
this identity and \eqref{eq:lem}, it follows that $\theta_
g+g\cdot(\inv^*\theta_{g^{-1}})=0$.
\end{proof}

\subsection{Infinitesimal data of poly-symplectic groupoids}

It is well known that Poisson structures are the infinitesimal
counterparts of symplectic groupoids, see e.g. \cite{CDW,We}. We will
now discuss the infinitesimal counterpart of poly-symplectic
groupoids, in the spirit of \cite{BCI}, which leads to a
generalization of Poisson structures in poly-symplectic geometry.

Let $A\to M$ denote the Lie algebroid of a Lie groupoid $\mathcal{G}
\rightrightarrows M$, with anchor $\rho: A\to TM$ and bracket
$[\cdot,\cdot]$ on $\Gamma(A)$. Recall from \cite{AC,BC1,BCWZ} that
a closed multiplicative $r$-form $\theta$ on $\mathcal{G}$ is
infinitesimally described by a bundle map (over the identity)
$$
\mu: A \to \wedge^{r-1}T^*M,
$$
satisfying the conditions
\begin{align}\label{Eq:a-IM-mform}
i_{\rho(u)}\mu(v)&=-i_{\rho(v)}\mu(u), \;\;\; \forall\, u,v \in A\\ 
\mu([u,v])&=\Lie_{\rho(u)}\mu(v)-i_{\rho(v)}d\mu(u),\;\;\; \forall\,
u,v \in \Gamma(A). \label{Eq:b-IM-mform}
\end{align}
The map $\mu$ is related to $\theta$ via
\begin{equation}\label{eq:rel1}
i_{u^R}\theta = t^*(\mu(u)),
\end{equation}
for $u\in \Gamma(A)$, where $u^R$ denotes the right-invariant vector
field on $\mathcal{G}$ defined by $u$. For source-simply-connected
Lie groupoids, $\mu$ and $\theta$ completely determine one another.

It follows that a closed multiplicative $\mathbb{R}^k$-valued 2-form
$\omega =(\omega_1,\ldots,\omega_k) \in
\Omega^2(\mathcal{G},\mathbb{R}^k)$ infinitesimally corresponds to a
bundle map
\begin{equation}\label{eq:mu_k}
\mu = (\mu_1,\ldots,\mu_k): A\to \oplus_{(k)}T^*M
\end{equation}
satisfying the same equations \eqref{Eq:a-IM-mform} an
\eqref{Eq:b-IM-mform}, which simply means the equations are
satisfied componentwise, i.e., each $\mu_j: A\to T^*M$ is a closed
IM 2-form. For the complete infinitesimal description of a
multiplicative poly-symplectic form, it remains to express the
non-degeneracy condition $\cap_{j=1}^k\Ker(\omega_j)=\{0\}$ in terms
of the map $\mu$ in \eqref{eq:mu_k}. We will do that in the more
general framework of multiplicative forms on $\mathcal{G}$ with
values in representations $E\to M$.

The infinitesimal version of multiplicative $E$-valued $r$-forms on
a Lie groupoid $\mathcal{G}$ was studied in \cite{CSS}, where it is
proven that (under the usual source-simply-connectedness condition
on $\mathcal{G}$) such forms $\theta$ are in 1-1 correspondence with
pairs of maps $(D,\mu)$,

\begin{center}\begin{tabular}{lr}
$D:\Gamma(A)\to \Omega^{r}(M,E),$&$\mu:A\to \wedge^{r-1}T^*M\otimes
E$,
\end{tabular}\end{center}
satisfying suitable conditions (that we will not need explicitly),
see \cite[Sec.~2.2]{CSS}. We will only need the following facts
about the infinitesimal data $(D,\mu)$. First, the relation between
the bundle map $\mu$ and the multiplicative $E$-valued form $\theta$
is a direct generalization of that in \eqref{eq:rel1}: indeed, using
\cite[Eqs. (3.1)-(3.3)]{BCWZ}, it follows that the second equation
of \cite[(2.4)]{CSS} is equivalent to
\begin{equation}\label{eq:muE}
i_{u^R}\theta =  t^*(\mu(u)).
\end{equation}
Second, when $E=\mathbb{R}^k$ is the {\it trivial} representation
and the multiplicative form $\theta$ is {\it closed}, then $D$ is
determined by $\mu$, in fact $D=d\mu$ (see \cite{BC1}); so in this
case one only needs $\mu$ for the infinitesimal description of
$\theta$.

We say that an $r$-form $\theta \in \Omega^r(\mathcal{G},t^*E)$  is
{\it non-degenerate} when the map
$$
\theta^\flat: T\mathcal{G} \to \wedge^{r-1}T^*\mathcal{G}\otimes
t^*E, \;\;\; X\mapsto i_X\theta
$$
has trivial kernel. When $\theta$ is multiplicative, we have the
following infinitesimal description of this property.

\begin{proposition} \label{Prop:nondeg. IM-form}
Consider $\theta\in \Omega^r(\mathcal{G},t^*E)$ a multiplicative
$E$-valued $r$-form on a Lie groupoid $\mathcal{G}$, and let $\mu:
A\to \wedge^{r-1}T^*M \otimes E$ be such that \eqref{eq:muE} holds.
Then $\theta$ is nondegenerate if and only if
\begin{equation}\label{eq:munondeg}
\Ker(\mu)=\{0\},\;\;  \mbox{ and } \;\; (\mathrm{Im}(\mu))^\circ=\{0\},
\end{equation}
where $(\mathrm{Im} (\mu))^\circ=\{X\in TM\,|\, i_X\mu(u)=0$ for all $u\in A\}$.
\end{proposition}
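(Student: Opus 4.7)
The plan is to first pin down the kernel of $\theta^\flat$ at the units $m \in M \hookrightarrow \mathcal{G}$ in terms of $\mu$, and then propagate nondegeneracy from the units to an arbitrary $g \in \mathcal{G}$ using multiplicativity.

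At a unit $m$, the differential of $\epsilon$ splits the short exact sequence $0 \to A_m \to T_m\mathcal{G} \to T_mM \to 0$, giving $T_m\mathcal{G} = A_m \oplus d\epsilon(T_mM)$. Writing $X = u + d\epsilon(W)$ in this splitting, I would evaluate $i_X\theta|_m$ on arbitrary test vectors (expanded the same way), using two ingredients: the relation \eqref{eq:muE}, which reads $(i_u\theta)|_m(Y_2,\ldots,Y_r) = \mu(u)(dt(Y_2),\ldots,dt(Y_r))$; and $\epsilon^*\theta = 0$ from Lemma~\ref{Lem:E-mult.prop.}, which kills any term whose arguments all lie in $d\epsilon(T_mM)$. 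A direct multilinear/alternating calculation—sliding any argument $v_j \in A_m$ to the front to produce a factor $\mu(v_j)(W,\ldots) = (i_W\mu(v_j))(\ldots)$—then yields
\[
\ker\theta|_m \;=\; \ker\mu|_m \;\oplus\; d\epsilon\bigl((\mathrm{Im}\,\mu)^\circ|_m\bigr).
\]
In particular, $\theta|_m$ is nondegenerate iff $\ker\mu|_m = 0$ and $(\mathrm{Im}\,\mu)^\circ|_m = 0$, which proves the ``only if'' direction.

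For the converse, assuming the two conditions on $\mu$, I show $\ker\theta|_g = 0$ for every $g \in \mathcal{G}$. Given $X \in \ker\theta|_g$, testing against right-invariant fields $v^R|_g$ (for $v \in A_{t(g)}$) via \eqref{eq:muE} yields $\mu(v)(dt(X), dt(Y_2),\ldots) = 0$ for all $v$ and $Y_j$, forcing $dt(X) \in (\mathrm{Im}\,\mu)^\circ|_{t(g)} = 0$, hence $X \in \ker(dt)|_g$. This vanishing of $dt(X)$ is precisely what makes $(0, X)$ tangent to $\mathcal{G}_{(2)}$ at $(g^{-1}, g)$, and $dm(0,X) = dL_{g^{-1}}(X)$. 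Applying the $E$-valued multiplicativity \eqref{Eq:E-mult.} at $(g^{-1}, g)$, together with the surjectivity of $dm$ (so that arbitrary test vectors at $s(g)$ lift to $\mathcal{G}_{(2)}$), I conclude that $dL_{g^{-1}}(X) \in \ker\theta|_{s(g)} = 0$, whence $X = 0$.

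The main obstacle is the propagation step: right-translation is only defined on $\ker(ds)$, not on all of $T_g\mathcal{G}$, which forces the two-step reduction—first confining $X$ to $\ker(dt)$ using right-invariant tests, then left-translating it back to the unit $s(g)$—and throughout one must correctly track the $g^{-1}$-action on the $E$-values in \eqref{Eq:E-mult.}. The analysis at the units is conceptually straightforward but demands some combinatorial bookkeeping with signs and multilinear expansion in order to isolate the $\ker\mu$ and $(\mathrm{Im}\,\mu)^\circ$ summands cleanly.
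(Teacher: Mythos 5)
Your proposal is correct and follows essentially the same route as the paper: both arguments rest on the relation \eqref{eq:muE}, the identity $\epsilon^*\theta=0$, and the splitting of $T\mathcal{G}$ along the units, with the passage from a general point $g$ back to the base achieved by translating a $t$-vertical kernel vector to a unit. The only cosmetic difference is that you apply multiplicativity directly at the pair $(g^{-1},g)$, whereas the paper invokes the inversion identity $\theta_g=-g\cdot(\mathrm{inv}^*\theta_{\mathrm{inv}(g)})$ of Lemma~\ref{Lem:E-mult.prop.} --- itself derived from multiplicativity at exactly such a pair --- to identify the kernel vector with $v^L_g$ for $v\in\Ker(\mu)$.
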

\begin{proof}
The  proof uses the relation \eqref{eq:muE} and follows the same
idea of \cite[Prop.~4.1]{BCI}. We recall the details for the
reader's convenience.

First we suppose that conditions (\ref{eq:munondeg}) hold for $\mu$
and take $X\in T_g\gpd$ in the kernel of the multiplicative form. We
get that $dtX=0$ because $i_Xt^*(\mu(u))=0$ for all $u\in A$ (from
\eqref{eq:muE}), hence $X$ is tangent to the $t$-fibers, which
implies the existence of $v\in A$ for which
$X=v^L_g=d_g\inv(v^R_g)$. As consequence of the second equation in
\eqref{Eq:E-mult.prop.} and \eqref{eq:muE}, we see that
$-g\cdot(s^*(\mu(v)))=i_{v_g^L}\theta_g=i_X\theta_g=0$ for any $g\in \mathcal{G}$, hence
$s^*(\mu(v))=0$. This shows that $v\in \Ker(\mu)$, therefore
$X=v_g^L=0$.

For the other direction, let $u\in \Ker(\mu)$. Then
$i_{u^R}\theta=t^*(\mu(u))=0$, which implies $u^R=0$ by
nondegeneracy of the form, thus the first condition in
\eqref{eq:munondeg} holds. Now fixing $X\in (\mathrm{Im}
(\mu))^\circ_m$ for $m\in M$, \eqref{eq:muE} implies that
$i_ui_X\theta=0$ for all $u\in A_m$. The splitting $T_m\gpd
=T_mM\oplus A_m$ allows us to write $Z_j=X_j+u_j\in T_m\gpd,
j=1,\dots ,r-1$, and the multilinearity of $\theta$ implies that
\[
i_{Z_{r-1}}\dots i_{Z_1}i_X\theta=i_{X_{r-1}}\dots i_{X_1}i_X\theta,
\]
because the other terms vanish from the fact that $i_ui_X\theta=0$
for all $u\in A_m$. Now the first condition in \eqref{Eq:E-mult.}
implies that $i_{Z_{r-1}}\dots i_{Z_1}i_X\theta=0$ for all $Z_j\in
T_m\gpd$, hence $X=0$.
\end{proof}

For the trivial representation $E=M \times \mathbb{R}$ and forms of
arbitrary degree $r$, Proposition~ \ref{Prop:nondeg. IM-form}
recovers \cite[Prop.~4.1]{BCI}. For the trivial representation $E=M
\times \mathbb{R}^k$ and $r=2$, we obtain the infinitesimal
description of multiplicative $k$-poly-symplectic forms.

\begin{corollary}\label{cor:polymu}
Let $\mathcal{G}\rightrightarrows M$ be a source-simply-connected
groupoid. Then there is a one-to-one correspondence between
multiplicative poly-symplectic forms $\omega\in
\Omega^2(\mathcal{G},\mathbb{R}^k)$ and bundle maps $\mu: A\to
\oplus_{(k)}T^*M$ satisfying \eqref{Eq:a-IM-mform},
\eqref{Eq:b-IM-mform} and \eqref{eq:munondeg} via the relation
$i_{u^R}\omega=t^*(\mu(u))$, for all $u\in \Gamma(A)$.
\end{corollary}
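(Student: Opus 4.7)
The plan is to combine two ingredients that are already in place: the classical correspondence between closed multiplicative forms and IM-forms on a source-simply-connected groupoid, and the nondegeneracy criterion established in Proposition~\ref{Prop:nondeg. IM-form}.

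First I would unpack what a multiplicative poly-symplectic form is in terms of the language of the excerpt. By definition, $\omega=(\omega_1,\ldots,\omega_k)\in \Omega^2(\mathcal{G},\mathbb{R}^k)$ is multiplicative, closed, and satisfies $\cap_{j=1}^k\Ker(\omega_j)=\{0\}$; equivalently, viewing $\omega$ as taking values in the trivial representation $t^*(M\times \mathbb{R}^k)$, it is a closed, multiplicative, non-degenerate $\mathbb{R}^k$-valued $2$-form in the sense of Proposition~\ref{Prop:nondeg. IM-form}. Componentwise, each $\omega_j$ is a closed multiplicative scalar $2$-form.

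Next I would invoke the source-simply-connected correspondence recalled immediately before the proposition (references \cite{AC,BC1,BCWZ,CSS}): for each $j$, closed multiplicative $\omega_j$ corresponds bijectively to a bundle map $\mu_j:A\to T^*M$ satisfying \eqref{Eq:a-IM-mform}--\eqref{Eq:b-IM-mform}, and determined by $i_{u^R}\omega_j=t^*(\mu_j(u))$. Assembling the $\mu_j$ into $\mu=(\mu_1,\ldots,\mu_k):A\to \oplus_{(k)}T^*M$ gives a bundle map that satisfies \eqref{Eq:a-IM-mform}--\eqref{Eq:b-IM-mform} componentwise (which is the same as the componentwise statement used in the excerpt) and encodes the relation $i_{u^R}\omega=t^*(\mu(u))$. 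Note that in the trivial-representation case only $\mu$ is needed, since $D=d\mu$ is automatically determined, as remarked right before Proposition~\ref{Prop:nondeg. IM-form}.

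Finally, to incorporate the poly-symplectic condition, I would apply Proposition~\ref{Prop:nondeg. IM-form} with $E=M\times \mathbb{R}^k$ and $r=2$: it identifies non-degeneracy of the multiplicative $2$-form $\omega$ (which is literally the poly-symplectic condition $\cap_j\Ker(\omega_j)=\{0\}$) with the two conditions $\Ker(\mu)=\{0\}$ and $(\mathrm{Im}(\mu))^\circ=\{0\}$, i.e.\ with \eqref{eq:munondeg}. Putting the pieces together yields the claimed bijection. The only mildly delicate step is the compatibility check that the scalar non-degeneracy $\cap_j\Ker(\omega_j)=\{0\}$ coincides with the $E$-valued non-degeneracy of $\omega^\flat$ in the sense preceding Proposition~\ref{Prop:nondeg. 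IM-form}, but this is immediate from the pointwise definition of $\omega^\flat$ on the trivial bundle. There is no substantial obstacle; the corollary is essentially a specialization and assembly of the previously proved results.
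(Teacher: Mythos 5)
Your proposal is correct and follows essentially the same route as the paper, which presents this statement as an immediate consequence of the componentwise correspondence between closed multiplicative $2$-forms and IM $2$-forms on a source-simply-connected groupoid, combined with Proposition~\ref{Prop:nondeg. IM-form} specialized to the trivial representation $E=M\times\mathbb{R}^k$ and $r=2$. No further commentary is needed.
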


Given a Lie algebroid $A\to M$, we see from
Corollary~\ref{cor:polymu} that bundle maps $\mu: A\to
\oplus_{(k)}T^*M$ satisfying \eqref{Eq:a-IM-mform},
\eqref{Eq:b-IM-mform} and \eqref{eq:munondeg} are the infinitesimal
counterparts of multiplicative poly-symplectic forms on Lie
groupoids. So we refer to these objects as {\it IM poly-symplectic
forms}, where ``IM'' stands for ``infinitesimally multiplicative''.
We say that two IM poly-symplectic forms $\mu: A\to
\oplus_{(k)}T^*M$ and $\mu': A'\to \oplus_{(k)}T^*M$ are {\it
equivalent} if there is a Lie algebroid isomorphism $\varphi: A\to
A'$ such that $\mu = \mu'\circ \varphi$. Under the equivalence in
Corollary~\ref{cor:polymu}, they correspond to isomorphic
poly-symplectic groupoids.

We will now use the infinitesimal geometry of poly-symplectic
groupoids described in Corollary~\ref{cor:polymu} to provide a new
viewpoint to \cite{IMV}.

\section{Poly-Poisson structures}\label{sec:ppois}

\subsection{Definition}\label{sebsec:def}
The notion of {\it poly-Poisson structure} that we now introduce is
a slight modification of that in \cite{IMV}.

\begin{definition}\label{Def:PP}
A {\em $k$-poly-Poisson  structure} on a manifold $M$ is a pair
$(S,P)$, where $S\to M$ is a vector subbundle of $\opk \TM$ and
$P:S\to TM$ is a vector-bundle morphism (over the identity) such
that
\begin{itemize}
\item[(i)] $i_{P(\bar{\eta})} \bar{\eta}=0$, for all $\bar{\eta}\in
S$,
\item[(ii)] $S^\circ= \{X\in TM| i_X\bar{\eta}=0, \,\forall\,\bar{\eta}\in
S\}=\{0\}$,
\item[(iii)] the space of section $\Gamma(S)$ is closed under the
bracket
\begin{equation}
\label{Def:bracketPP} \lfloor \bar{\eta},\bar{\gamma}
\rfloor:=\Lie_{P(\bar{\eta})}\bar{\gamma}-
\Lie_{P(\bar{\gamma})}\bar{\eta}+d(i_{P(\bar{\gamma})}\bar{\eta})=\Lie_{P(\bar{\eta})}\bar{\gamma}-
i_{P(\bar{\gamma})}d\bar{\eta}, \; \mbox{ for } \;
\bar{\gamma},\bar{\eta}\in \Gamma(S),
\end{equation}
and the restriction of this bracket to $\Gamma(S)$ satisfies the
Jacobi identity.
\end{itemize}
\end{definition}

We will call the triple $(M,S,P)$ a {\it $k$-poly-Poisson manifold}.

We observe that the bracket \eqref{Def:bracketPP} is skew-symmetric
(by condition (i)) and satisfies the Leibniz rule:
$$
\lfloor \bar{\eta}, f\bar{\gamma} \rfloor=f\lfloor \bar{\eta},
\bar{\gamma}\rfloor + (\Lie_{P(\bar{\eta})}f)\bar{\gamma},
$$
for all $\bar{\eta}, \bar{\gamma} \in \Gamma(S)$ and $f\in
C^\infty(M)$. It follows that, for a poly-Poisson manifold
$(M,S,P)$, the vector bundle $S\to M$ is a {\it Lie algebroid} with
bracket \eqref{Def:bracketPP} and anchor map $P: S\to TM$. Since for
any Lie algebroid the anchor map preserves Lie brackets, we have
that
\begin{equation}\label{eq:brk}
P(\lfloor \bar{\eta}, \bar{\gamma} \rfloor) =
[P(\bar{\eta}),P(\bar{\gamma})], \;\; \forall \; \bar{\eta},
\bar{\gamma} \in \Gamma(S).
\end{equation}

\begin{remark}\label{rem:integcond}
In $(iii)$ of Def.~\ref{Def:PP}, assuming that $\Gamma(S)$ is closed
under the bracket \eqref{Def:bracketPP}, we can replace the
condition on the Jacobi identity by the bracket-preserving property
\eqref{eq:brk}. Indeed, if \eqref{eq:brk} holds and for $\bar{\eta}, \bar{\lambda}, \bar{\gamma} \in \Gamma(S)$, then
\begin{align*}
\lfloor \lfloor \bar{\eta},\bar{\gamma}\rfloor, &
\bar{\lambda}\rfloor  +\lfloor \bar{\gamma}, \lfloor \bar{\eta}
,\bar{\lambda}\rfloor \rfloor =
\Lie_{[P(\bar{\eta}),P(\bar{\gamma})]}\bar{\lambda}
-i_{P(\bar{\lambda})}d\lfloor \bar{\eta},\bar{\lambda}\rfloor +
\Lie_{P(\bar{\gamma})}\lfloor \bar{\eta},\bar{\lambda}  \rfloor-
i_{[P(\bar{\eta}),P(\bar{\lambda})]}d\bar{\gamma}\\
&=\Lie_{P(\bar{\eta})}\Lie_{P(\bar{\gamma})}\bar{\lambda}-i_{P(\bar{\lambda})}\Lie_{P(\bar{\eta})}d\bar{\gamma}+ i_{P(\bar{\lambda})}\Lie_{P(\bar{\gamma})}d\bar{\eta}
-\Lie_{P(\bar{\gamma})}i_{P(\lambda)}d\bar{\eta}-i_{[P(\bar{\eta}),P(\bar{\lambda})]}d\bar{\gamma}\\
&=\Lie_{P(\bar{\eta})}\Lie_{P(\bar{\gamma})}\bar{\lambda}- \Lie_{P(\bar{\eta})}i_{P(\bar{\lambda})}d\bar{\gamma}
+i_{P(\bar{\lambda})}\Lie_{P(\bar{\gamma})}d\bar{\eta}- \Lie_{P(\bar{\gamma})}i_{P(\bar{\lambda})}d\bar{\eta}\\
&=\Lie_{P(\bar{\eta})}\lfloor \bar{\gamma},\bar{\lambda}\rfloor
-i_{[P(\bar{\gamma}),P(\bar{\lambda})]}d\bar{\eta}=\lfloor
\bar{\eta},\lfloor \bar{\gamma},\bar{\lambda}\rfloor \rfloor,
\end{align*}
where the second equality holds by $\Lie_{[X,Y]}=[\Lie_X,\Lie_Y]$
and the third results from Cartan's magic formula.

\end{remark}
It follows from this remark that condition (iii) in
Def.~\ref{Def:PP} is equivalent to
\begin{itemize}
\item[(iii)'] {\em the space of section $\Gamma(S)$ is closed under the
bracket \eqref{Def:bracketPP} and \eqref{eq:brk} holds.}
\end{itemize}

\begin{remark}[Comparison with \cite{IMV}]\label{rem:IMV}
The notion of poly-Poisson structure in Def.~\ref{Def:PP} is
slightly more restrictive than the notion  introduced by Iglesias,
Marrero and Vaquero in \cite[Def.~3.1]{IMV}. The difference is that
in \cite{IMV} our condition (ii) in Def.~\ref{Def:PP}, namely
$S^\circ=\{0\}$, is replaced by the following weaker requirement:
\begin{equation}\label{eq:iiweak}
\mathrm{Im}(P)\cap S^\circ = \{0\}.
\end{equation}
We will refer to such objects as {\em weak-poly-Poisson structures}.
\end{remark}

Let $(M_j,S_j,P_j)$, $j=1,2$, be $k$-poly-Poisson manifolds.

\begin{definition}\label{Def:mor}
A smooth map $f:M_1\to M_2$ is called a {\em poly-Poisson morphism}
if
\begin{enumerate}[a)]
\item $f^*\bar{\eta}\in S_1$ for all $\bar{\eta}\in S_2$,
\item for every $x\in M_1$ and $\bar{\eta}\in S_2|_{f(x)}$, $Tf|_x(P_1(Tf|_x^*\bar{\eta})) = P_2(\bar{\eta})$.
\end{enumerate}
\end{definition}

The following are basic examples of Def.~\ref{Def:PP}.

\begin{example}
For $k=1$, a $k$-poly-Poisson structure is simply a usual Poisson
structure. Indeed, if $S$ is subbundle of $T^*M$, condition (ii) in
Def.~\ref{Def:PP} shows that
$$
S=T^*M.
$$
(Note that this is not guaranteed by the weaker condition
\eqref{eq:iiweak}.) Condition (i) shows that $P:T^*M\to TM$ is of
the form $P=\pi^\sharp$ for a bivector field $\pi \in
\Gamma(\wedge^2TM)$, where $\pi^\sharp(\alpha)=i_\alpha\pi$.
Finally, condition (iii) amounts to the usual integrability
condition $[\pi,\pi]=0$ (i.e., the bracket on $C^\infty(M)$ given by
$(f,g)\mapsto \pi(df,dg)$ satisfies the Jacobi identity). The Lie
algebroid structure on $S=T^*M$ is the usual one for Poisson
manifolds \cite{Va}: the anchor is $\pi^\sharp$ and the
bracket $[\cdot,\cdot]$ on $\Omega^1(M)$ is the one such that
$[df,dg] = d(\pi(df,dg))$. The notion of morphism in
Def.~\ref{Def:mor} also recovers to the usual notion of Poisson
morphism.
\end{example}

\begin{example}\label{Ex:polysp}
Let $(M,\w)$ be a $k$-poly-symplectic  manifold, and consider the
injective bundle map $\w^\flat: TM \to \opk T^*M$. We define a
subbundle $S_\w$ of $\opk T^*M$ and a bundle map $P_\w:S\to TM$ as
follows:
\begin{equation}\label{Eq:S_wP_w}
\begin{tabular}{ccc}
$S_\w:=\mathrm{Im}(\w^\flat)$& and & $P_\w(i_X\w):=X\in TM$.\\
\end{tabular}
\end{equation}
See \cite[Prop.~2.3 and Example~3.3]{IMV}. Note that condition (ii)
in Def.~\ref{Def:PP} is equivalent to the non-degeneracy of $\w$.

Moreover, given $k$-poly-symplectic manifolds $(M_j,\w_j)$, $j=1,2$,
a diffeomorphism $f: M_1\to M_2$ preserves poly-Poisson structures
(as in Def.~\ref{Def:mor}) if and only if
$$
f^*\w_2=\w_1.
$$
\end{example}

\begin{example}\label{ex:PPtrivial}
Let $Q$ be a manifold. We can always regard it as a Poisson manifold
with the Poisson bracket that is identically zero. For each $k$, we
can also view $Q$ as a $k$-poly-Poisson manifold, and this can be
done in several ways. For example, $S_1=\oplus_{(k)}T^*Q$ and
$P_1=0$ define a poly-Poisson structure on $Q$, and the same is true
for $S_2=\{\alpha\oplus\ldots\oplus\alpha\,|\, \alpha \in T^*Q\}
\subset \oplus_{(k)}T^*Q$ and $P_2=0$, or
$S_3=\{\alpha\oplus0\oplus\ldots\oplus 0\,|\, \alpha \in T^*Q\}
\subset \oplus_{(k)}T^*Q$ and $P_3=0$.

Considering $\oplus_{(k)}T^*Q$ equipped with its poly-symplectic
structure (see Example~\ref{Ex:polysymplectic}), the natural
projection $\oplus_{(k)}T^*Q \to Q$ is a poly-Poisson map when $Q$
is equipped with either one of the poly-Poisson structures
$(S_i,P_i)$, for $i=1,2,3$.
\end{example}

\begin{remark}\label{rem:unique}
It is a well-known fact in Poisson geometry that $M$ is a Poisson
manifold and $f: M\to N$ is a surjective submersion, then there is
at most one Poisson structure on $N$ for which $f$ is a Poisson map.
Example~\ref{ex:PPtrivial} shows that this is not necessarily the
case for $k$-poly-Poisson structures, for $k\geq 2$.

On the other hand, let $M$ be a $k$-poly-Poisson manifold and
$f:M\to N$ be a surjective submersion. Then if $(S_1,P_1)$ and
$(S_2,P_2)$ are $k$-poly-Poisson structures on $N$ for which $f$ is
a poly-Poisson map and we know that $S_1=S_2$, then $P_1=P_2$.
\end{remark}

As explained in \cite[Example~3.8]{IMV}, the product of Poisson
manifolds carries a natural poly-Poisson structure.



\begin{example} \label{Ex:PPprod}
Let $(M_j,\pi_j)$, $j=1,\ldots,k$, be Poisson manifolds. Let
$M=M_1\times \ldots \times M_k$. Denote by $S_j$ the natural
inclusion of $T^*M_j$ into $T^*M$, and let $S\subset \opk T^*M$ be
defined by $S: = S_1\oplus \ldots \oplus S_k$. Consider the bundle
map $P: S\to TM$,
$$
P(\alpha_1,\ldots,\alpha_k) =
(\pi_1^\sharp(\alpha_1),\ldots,\pi_k^\sharp(\alpha_k)),
$$
where $\alpha_j\in S_j$. One may verify that $(M,S,P)$ is a
$k$-poly-Poisson manifold directly from the definition.

 In addition,
let $f_j:(M_j,\pi_j)\to (N_j,\Lambda_j)$ for $j=1,\dots,k$, be $k$
Poisson maps between the Poisson manifolds $M_j$ and $N_j$
respectively.  From the construction above we obtain
$k$-poly-Poisson structures $(S_M,P_M)$ and $(S_N,P_N)$  on the
product manifolds $M=\prod_{j=1}^kM_j$ and $N=\prod_{j=1}^kN_j$, and
denote by $pr^M_j: M\to M_j$, and $pr^N_j: N\to N_j$ the natural
projections. The Poisson maps $f_j$ induce a product map
$\bar{f}=(f_1,\dots,f_k):M\to N$ that, as a consequence of the
definition of the $k$-poly-Poisson manifold and the relations
$pr^N_j\circ \bar{f}=f_j\circ pr^M_j$, is a poly-Poisson map.

\end{example}

The next example is a particular case of the direct-sum of linear
Poisson structures treated in \cite[Example~3.9]{IMV}.

\begin{example}\label{Ex:Lalgb_k}
Let $\LG$ be a Lie algebra, and let
$$
\LG_{(k)}:=\LG\times \overset{(k}{\cdots} \times \LG,\;\;\;
\LG^*_{(k)}:=\LG^*\times \overset{(k}{\cdots} \times \LG^*.
$$
For $u\in \LG$, let $u_j \in \LG_{(k)}$ denote the element
$(0,\ldots,0,u,0,\ldots,0)$, with $u$ in the $j$-th entry.
 Since $\LG^*$ is equipped with its
Lie-Poisson structure, $\LG^*_{(k)}$ naturally carries a product
poly-Poisson structure, as in Example~\ref{Ex:PPprod}. More
important to us is the following {\it direct-sum} poly-Poisson
structure \cite{IMV} : over each
$\zeta=(\zeta_1,\dots ,\zeta_k)\in \LG^*_{(k)}$, we define
$$
S|_\zeta:=\{(u_1,\dots ,u_k)|u\in \LG\} \subseteq \opk T^*_\zeta
\LG^*_{(k)} \cong \opk \LG_{(k)},
$$
and the bundle map $P: S \to T \LG^*_{(k)}$,
$$
P_\zeta(u_1,\dots ,u_k):=(\ad_{u}^*\zeta_1,\dots ,\ad_{u}^*\zeta_k)
\in T_\zeta \LG^*_{(k)}\cong \LG^*_{(k)}.
$$
We remark that $S$ satisfies (ii) in Def.~\ref{Def:PP}, not just
\eqref{eq:iiweak}.


\end{example}

\subsection{Poly-Poisson structures and poly-symplectic groupoids}

We will now justify our definition of poly-Poisson structure in
Def.~\ref{Def:PP} in light of its relation with poly-symplectic
groupoids.

Let $(M,S,P)$ be a $k$-poly-Poisson manifold. We saw in
Section~\ref{sebsec:def} that the vector subbundle $S\subseteq \opk
T^*M$ is a Lie algebroid, with anchor $P:S \to TM$ and bracket
\eqref{Def:bracketPP}.

\begin{lemma}\label{lem:incl}
Let $\mu: S\hookrightarrow \opk T^*M$ be the inclusion. Then $\mu$
is an IM poly-symplectic form on the Lie algebroid $S\to M$, i.e.,
$\mu$ satisfies \eqref{Eq:a-IM-mform}, \eqref{Eq:b-IM-mform} and
\eqref{eq:munondeg}.

Conversely, any IM poly-symplectic form $\mu: A\to \opk T^*M$ is
equivalent to one coming from a $k$-poly-Poisson structure.
\end{lemma}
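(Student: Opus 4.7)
The plan is to verify each direction by unpacking the definitions. For the forward direction, the two differentials of the Lie algebroid $S\to M$ are the anchor $P$ and the bracket $\lfloor\cdot,\cdot\rfloor$ from \eqref{Def:bracketPP}. So, with $\mu$ the inclusion $S\hookrightarrow\opk T^*M$, the conditions to check are that for $\bar\eta,\bar\gamma\in\Gamma(S)$:
\begin{itemize}
\item \eqref{Eq:a-IM-mform} reads $i_{P(\bar\eta)}\bar\gamma=-i_{P(\bar\gamma)}\bar\eta$, which follows from (i) of Def.~\ref{Def:PP} by polarizing $i_{P(\bar\eta+\bar\gamma)}(\bar\eta+\bar\gamma)=0$ and using that $P$ is linear.
\item \eqref{Eq:b-IM-mform} reads $\lfloor\bar\eta,\bar\gamma\rfloor=\Lie_{P(\bar\eta)}\bar\gamma-i_{P(\bar\gamma)}d\bar\eta$, which is exactly the definition \eqref{Def:bracketPP}.
\item \eqref{eq:munondeg}: $\Ker(\mu)=\{0\}$ because $\mu$ is the inclusion, and $(\Im\mu)^\circ=S^\circ=\{0\}$ is condition (ii) of Def.~\ref{Def:PP}.
\end{itemize}
So the forward direction is essentially a matching of names.

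For the converse, given $\mu\colon A\to\opk T^*M$ an IM poly-symplectic form, I would set $S:=\mu(A)\subseteq\opk T^*M$. Since $\Ker(\mu)=\{0\}$, the map $\mu\colon A\to S$ is a vector bundle isomorphism; use it to transport the anchor to $P\colon S\to TM$, $P(\mu(u)):=\rho(u)$. Then:
\begin{itemize}
\item Condition (i) of Def.~\ref{Def:PP} becomes $i_{\rho(u)}\mu(u)=0$, which follows from \eqref{Eq:a-IM-mform} by setting $v=u$.
\item Condition (ii) is $S^\circ=(\Im\mu)^\circ=\{0\}$, the second half of \eqref{eq:munondeg}.
\item For (iii), use $\mu$ to transport the Lie algebroid bracket of $A$ to a bracket on $\Gamma(S)$; by \eqref{Eq:b-IM-mform} this transported bracket agrees with \eqref{Def:bracketPP}, so $\Gamma(S)$ is closed under it, and the Jacobi identity is inherited from $[\cdot,\cdot]$ on $\Gamma(A)$.
\end{itemize}
Finally, to interpret the result as an equivalence, I would observe that with these definitions $\mu=\mu_S\circ\varphi$, where $\varphi:=\mu\colon A\to S$ and $\mu_S\colon S\hookrightarrow\opk T^*M$ is the inclusion associated to the poly-Poisson structure $(S,P)$. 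The map $\varphi$ is a Lie algebroid isomorphism by construction: it intertwines anchors ($P\circ\varphi=\rho$ by definition of $P$) and, by the bracket computation above, it intertwines the bracket on $A$ with $\lfloor\cdot,\cdot\rfloor$ on $\Gamma(S)$.

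The only subtle point, and the place I expect to spend a little care, is the bracket-transport step in (iii): one must check that the bracket on $\Gamma(A)$, pushed through $\mu$, actually lands in $\Gamma(S)$ and equals $\lfloor\cdot,\cdot\rfloor$ of \eqref{Def:bracketPP} rather than being off by some term; but this is exactly the content of \eqref{Eq:b-IM-mform} when one remembers that $\mu$ takes values in $\opk T^*M$ and the Lie and interior derivatives act componentwise. The remaining conditions collapse to one-line verifications.
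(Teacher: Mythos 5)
Your proposal is correct and follows the same route as the paper: identify each IM condition with the corresponding item of Def.~\ref{Def:PP} for the inclusion, and conversely set $S=\mathrm{Im}(\mu)$, use $\Ker(\mu)=\{0\}$ to make $\mu$ a bundle isomorphism onto $S$, and transport the anchor and bracket through it. The paper leaves these verifications implicit (``one may directly verify''), so your explicit polarization argument for condition (i) and the bracket-transport computation for (iii) merely fill in steps the paper omits.
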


\begin{proof}
Note that \eqref{Eq:a-IM-mform} is just (i) in Def.~\ref{Def:PP},
while property \eqref{Eq:b-IM-mform} follows from (iii) in
Def.~\ref{Def:PP}. Since $\mu$ is an inclusion, $\Ker(\mu)=\{0\}$.
The second condition in \eqref{eq:munondeg} is (ii) in
Def.~\ref{Def:PP}.

On the other hand, given an IM poly-symplectic form $\mu: A\to \opk
T^*M$, we define $S=\mathrm{Im}(\mu)$. Note (from the first
condition in \eqref{eq:munondeg}) that $\mu$ is a vector-bundle
isomorphism onto $S$, and let $P: S\to TM$ be its inverse $S\to A$
composed with the anchor $A\to TM$. One may directly verify from
conditions \eqref{Eq:a-IM-mform}, \eqref{Eq:b-IM-mform} and
\eqref{eq:munondeg} that $S$ and $P$ define a $k$-poly-Poisson
structure, and that $\mu$ is equivalent to the inclusion
$S\hookrightarrow \opk T^*M$.

\end{proof}

In short, the lemma says that a $k$-poly-Poisson manifold $(M,S,P)$
endows $S$ with a Lie algebroid structure for which the inclusion
$S\hookrightarrow \opk T^*M$ is an IM poly-symplectic form, and that
any IM poly-symplectic form is equivalent to one of this type.

Following Corollary~\ref{cor:polymu}, we see that poly-Poisson
manifolds are the infinitesimal counterparts of poly-symplectic
groupoids, as explained by the next result. For a $k$-poly
symplectic groupoid $(\mathcal{G}\rightrightarrows M,\w)$, let $\mu:
A\to \oplus_{(k)}T^*M$ be the bundle map determined by $\omega$ as
in Cor.~\ref{cor:polymu}. Explicitly, using the natural
decomposition $T\mathcal{G}|_M = TM\oplus A$,
$$
\mu(u) = \omega^\flat(u)|_{\oplus_{(k)}TM},
$$
for $u\in A$.

\begin{theorem}[Integration of poly-Poisson structures]\label{Thm:integration}
If $(\mathcal{G}\rightrightarrows M,\w)$ is a k-poly-symplectic
groupoid, then there exists a unique $k$-poly-Poisson  structure
$(S,P)$ on $M$ such that $S=\mathrm{Im}(\mu)$ while $P$ is
determined by the fact that the target map $t:\mathcal{G}\to M$ is a
poly-Poisson morphism.

Conversely, let $(M,S,P)$ be a $k$-poly-Poisson  manifold and
$\mathcal{G}\rightrightarrows M$ be a source-simply-connected
groupoid integrating the Lie algebroid $S\to M$. Then there is a $\w
\in \Omega^2(\mathcal{G},\mathbb{R}^k)$, unique up to isomorphism,
making $\mathcal{G}$ into a poly-symplectic groupoid for which
$t:\mathcal{G}\to M$ is a poly-Poisson morphism.
\end{theorem}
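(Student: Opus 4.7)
The plan is to deduce both directions from the two bijections already established: Corollary \ref{cor:polymu} identifies multiplicative poly-symplectic forms on a source-simply-connected Lie groupoid with IM poly-symplectic forms on its Lie algebroid, while Lemma \ref{lem:incl} identifies IM poly-symplectic forms with poly-Poisson structures. Composing these gives the desired correspondence; the only genuine content beyond these results is verifying that the dictionary respects the condition that $t$ is a poly-Poisson morphism.

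For the first direction, given $(\mathcal{G},\omega)$ I extract from Corollary \ref{cor:polymu} the IM poly-symplectic form $\mu: A \to \opk\TM$ associated to $\omega$, and Lemma \ref{lem:incl} then produces the poly-Poisson structure $(S,P) := (\mathrm{Im}(\mu),\, \rho \circ \mu^{-1})$ on $M$. To show that $t$ is a poly-Poisson morphism with respect to the structure $(S_\omega,P_\omega)$ on $\mathcal{G}$ coming from Example \ref{Ex:polysp}, I rely on two facts. First, the IM relation $i_{u^R}\omega = t^*\mu(u)$ of \eqref{eq:rel1} shows that for any $\bar{\eta}=\mu(u) \in S$ the pullback $t^*\bar{\eta}$ equals $\omega^\flat(u^R)$, hence lies pointwise in $\mathrm{Im}(\omega^\flat) = S_\omega$, giving condition (a) of Definition \ref{Def:mor}. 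Second, the same relation gives $P_\omega(Tt^*\bar{\eta}) = u^R$, and since right-translation satisfies $t \circ R_g = t$ one has $Tt|_g(u^R_g) = \rho(u) = P(\bar{\eta})$, giving condition (b). Uniqueness of $P$ for the prescribed $S$ then follows from Remark \ref{rem:unique}, since $t$ is a surjective submersion.

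For the converse, Lemma \ref{lem:incl} lets me view the inclusion $\mu: S \hookrightarrow \opk\TM$ as an IM poly-symplectic form on the Lie algebroid $S$, and Corollary \ref{cor:polymu} then provides a unique multiplicative poly-symplectic $\omega$ on the given source-simply-connected integration $\mathcal{G}$ satisfying $i_{u^R}\omega = t^*\mu(u)$. That $t$ is a poly-Poisson morphism is verified by exactly the computation of the previous paragraph, and the uniqueness statement (up to isomorphism) reflects both the uniqueness of source-simply-connected integrations of $S$ and the one-to-one correspondence of Corollary \ref{cor:polymu}. The main step requiring care is the morphism identity for $t$: while it is ultimately a short computation, it is the only point where the two structural bijections interact, and it hinges precisely on the compatibility $i_{u^R}\omega = t^*\mu(u)$ together with $Tt\circ u^R = \rho(u)$. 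Once these are in place the remainder of the argument is formal.
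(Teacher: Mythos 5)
Your proposal is correct and follows essentially the same route as the paper: both directions are obtained by composing Corollary \ref{cor:polymu} with Lemma \ref{lem:incl}, and the only substantive verification is that the relation $i_{u^R}\omega=t^*(\mu(u))$ together with $t_*(u^R)=\rho(u)$ forces $t$ to be a poly-Poisson morphism, with uniqueness of $P$ handled via Remark \ref{rem:unique}. The paper's proof is the same argument in slightly more compressed form.
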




We say that a poly-symplectic groupoid {\it integrates} a
poly-Poisosn structure if they are related as in the theorem. We
observe that this correspondence between source-simply-connected
poly-symplectic groupoids and poly-Poisson manifolds (with
integrable Lie algebroid) extends the well-known relationship
between symplectic groupoids and Poisson manifolds when $k=1$, see
\cite{CDW,McX}.

\begin{proof}
We know from Corollary~\ref{cor:polymu} that multiplicative
poly-symplectic structures on $\mathcal{G}\rightrightarrows M$
correspond to IM poly-symplectic forms $\mu$ on its Lie algebroid
$A\to M$ via
\begin{equation}\label{eq:cond}
i_{u^R}\omega = t^*(\mu(u)),
\end{equation}
and that $\mu$ corresponds to a poly-Poisson structure $(S,P)$ on
$M$, as described in Lemma~\ref{lem:incl}. It remains to verify that
condition \eqref{eq:cond} implies that $t$ is a poly-Poisson map.

Let $\alpha\in \mathrm{Im}(\mu)=S$. Then $\alpha=\mu(u)$ for a
unique $u\in A$, and $P(\alpha)=\rho(u)$. Let $(S_\w, P_\w)$ be the
poly-Poisson structure defined by $\w$, as in \eqref{Eq:S_wP_w}.
Then \eqref{eq:cond} says that $t^* \alpha \in S_\w$ and
$u^R=P_\w(t^*\alpha)$; the fact that on any Lie groupoid we have
$t_*(u^R)= \rho(u)$ implies that $t_* P_\w(t^*\alpha)=P(\alpha)$,
i.e., $t$ is a poly-Poisson map.
\end{proof}


\begin{remark}\label{rem:unique2}
Given a $k$-poly-symplectic groupoid $(\mathcal{G}\rightrightarrows
M,\w)$, the uniqueness of the induced poly-Poisson structure $(S,P)$
on $M$ follows from Remark~\ref{rem:unique}: note that $S$ is
determined by $\omega$, while $P$ is completely defined from the
property that $t$ is a poly-Poisson map.
\end{remark}

We illustrate the correspondence in Theorem~\ref{Thm:integration}
with some simple examples.

\begin{example}\label{Ex:trivialPP}

The $k$-poly-symplectic manifold $\opk \TQ$ of
Example~\ref{Ex:polysymplectic} is a poly-symplectic groupoid over
$Q$, with respect to fibrewise addition; the source and target maps
coincide with the projection $\opk \TQ\to Q$. The corresponding
$k$-poly-Poisson structure on $Q$ is the trivial one, given by
$S:=\opk \TQ$ and $P=0$. Note that Example~\ref{ex:PPtrivial} shows
other poly-Poisson structures on $Q$ for which the projection $\opk
\TQ\to Q$ is a poly-Poisson map, but there is only one with the
bundle $S$ prescribed by Theorem~\ref{Thm:integration}.


\end{example}

\begin{example}
Let $(M,\w)$ be a $k$-poly-symplectic  manifold, that we view as a
poly-Poisson manifold as in Example~\ref{Ex:polysp}. The
non-degeneracy of $\w$ implies that the Lie algebroid $(S_\w,P_\w)$
is isomorphic to $TM$. Hence this poly-Poisson structure is
integrated by the pair groupoid $M\times M\rightrightarrows M$,
equipped with the $k$-poly-symplectic  structure $t^*\omega - s^*\omega \in
\Omega^2(M\times M, \mathbb{R}^k)$, where $s,t$ are the source and target maps
on the pair groupoid, i.e $t(x,y)=x$ and $s(x,y)=y$.
\end{example}

\begin{example}\label{Ex:prod}
Consider Poisson manifolds $(M_j,\pi_j)$, $j=1,\ldots,k$, and equip
$M=M_1\times \ldots\times M_k$ with the product poly-Poisson
structure of Example~\ref{Ex:PPprod}. For each $j$, suppose that
$(\mathcal{G}_j\rightrightarrows M_j, \omega_j)$ is a symplectic
groupoid integrating $(M_j,\pi_j)$. Then the product poly-symplectic
groupoid $\mathcal{G}=\mathcal{G}_1\times \ldots \times
\mathcal{G}_k$ of Prop.~\ref{prop:prod} integrates $M$. Indeed, one
may verify that the bundle $S$ on $M$ described in
Example~\ref{Ex:PPprod} agrees with the one prescribed by
Theorem~\ref{Thm:integration} and, as a consequence of the
construction of poly-Poisson maps as products of Poisson maps in
Example \ref{Ex:PPprod}, the target map on $\gpd\rightrightarrows M$
is a poly-Poisson map.
\end{example}


\begin{example}[Lie-Poisson structures]\label{Ex:int.Lalgb_k}
Let $\G$ be a Lie group and $\LG$ its Lie algebra. As seen in
Example~\ref{Ex:polysymplectic}, $\opk T^*\G$ has a natural
$k$-poly-symplectic structure $\omega$.

The diagonal coadjoint action of $\G$ on $\LG_{(k)}^*$, denoted by
$\Ad_g^*$, endows $\G\times \LG_{(k)}^*$ with a
groupoid structure over $\LG_{(k)}^*$, with source and target maps
given by
\begin{center}\begin{tabular}{cc}
$s(g,\zeta)=\zeta,$&$t(g,\zeta)=\Ad_g^*\zeta$\\
\end{tabular}\end{center}
and multiplication $m((g,\zeta),(h,\eta))=(gh,\eta)$ if $\Ad_h^*\eta=\zeta$.
Using the identification $T^*\G \cong \G \times\LG^*$ (by right translation),
we see that
$$
\opk T^*\G \cong \G\times \LG_{(k)}^*,
$$
so we may consider $\opk T^*\G$ as a Lie groupoid, and its
poly-symplectic structure $\omega$ makes it into a poly-symplectic
groupoid. This structure integrates the direct-sum poly-Poisson
structure on $\LG^*_{(k)}$ described in Example~\ref{Ex:Lalgb_k}.
Indeed, $t$ has the Poisson maps $\G \times \LG^*\to \LG^*$ as its
coordinates, so it is a poly-Poisson map. And one can check that the
bundle $S$ of the direct-sum poly-Poisson structure is the one
induced by the poly-symplectic structure $\omega$ according to
Theorem~\ref{Thm:integration}.


\begin{remark}
More generally: following \cite{IMV} there is a direct-sum
poly-Poisson structure on $A^*\oplus\ldots\oplus A^*$ where $A^*\to
M$ is endowed with the linear Poisson structures (defined on the
dual bundle to the Lie algebroid $A\to M$). Each $A^*$ is integrated
by the symplectic groupoid $T^*\mathcal{G}\rightrightarrows A^*$,
where $\mathcal{G}\rightrightarrows M$ is the groupoid integrating
$A$, and it can be similarly proved that the direct sum
$T^*\mathcal{G}\oplus \ldots\oplus T^*\mathcal{G}$ over $\gpd$ is
the poly-symplectic groupoid integrating $A^*\oplus\ldots\oplus
A^*$.
\end{remark}

\end{example}

\subsection{Poly-symplectic foliation}

It is well known that any Poisson manifold has an underlying
symplectic foliation which uniquely determines the Poisson
structure. More generally, let $(S,P)$ be a $k$-poly-Poisson
structure on $M$. Since $S$ has a Lie algebroid structure, the
distribution $D:= P(S)\subseteq TM$ is integrable, and its leaves
define a singular foliation on $M$. Each leaf $\iota:
\mathcal{O}\hookrightarrow M$ carries an $\mathbb{R}^k$-valued
2-form $\w_\mathcal{O}$ determined by the condition
\begin{equation}\label{Eq:PSFol}
\w_\mathcal{O}^\flat : T\mathcal{O} \to \oplus_{(k)}
T^*\mathcal{O},\;\;\; P(\bar{\eta}) \mapsto \iota^*\bar{\eta}.
\end{equation}
The fact that the 2-form $\omega_\mathcal{O}$ on $\mathcal{O}$ is
well defined follows from (i) in Def.~\ref{Def:PP}, (ii) guarantees
that it is non-degenerate and (iii) that it is closed, see
\cite[Sec.~3]{IMV}. So $(S,P)$ determines a singular foliation on
$M$ with $(k+1)$-poly-symplectic leaves.


A first remark on the poly-symplectic foliation of a
$k$-poly-Poisson structure is that, in contrast with the case $k=1$,
different $k$-poly-Poisson structures may correspond to the same
poly-symplectic foliation, as shown in the next example.

\begin{example}\label{Ex:PSfoliation is not unique}
Let $\w_t$ be a smooth family of $k$-poly-symplectic forms on
$M$ parametrized by $t\in \mathbb{R}$ and define the following
vector subbundles of $\opk T^*(M\times \mathbb{R})$:
\begin{center}\begin{tabular}{l}
$S_1\big{|}_{(m,t)}:=\{(i_X\w_t,r_1\oplus \cdots \oplus r_k)|X\in T_mM, r_j\in T_t^*\mathbb{R}\},$\\
$S_2\big{|}_{(m,t)}:=\{(i_X\w_t,r\oplus \cdots \oplus r)|X\in T_mM, r\in T_t^*\mathbb{R}\},$\\
$S_3\big{|}_{(m,t)}:=\{(i_X\w_t,r\oplus 0 \cdots \oplus 0)|X\in T_mM,
r\in T_t^*\mathbb{R}\};$
\end{tabular}\end{center}
on each $S_j$ we define $P_j(i_X\w_t,\bar{\gamma})=X.$ Observe that
each $(S_j,P_j)$ is a poly-Poisson
structure on $M\times \mathbb{R}$ 
but these three $k$-poly-Poisson structures have the same
poly-symplectic foliation on $M\times \mathbb{R}$.
Same conclusion holds for the weak-poly-Poisson structure given by
$$S_0\big{|}_{(m,t)}:=\{(i_X\w_t,0)|X\in T_mM\} \mbox{\ \ and\ \ }P_0(i_X\w_t,0)=X$$
where the poly-symplectic foliation is described on Theorem 3.4 on \cite{IMV}.
\end{example}

We now discuss the possibility of defining a poly-Poisson structure
from a poly-symplectic foliation. Given a subspace $D_m\subseteq
T_mM$, for $m\in M$, equipped with a $(k+1)$-poly-symplectic form
$\omega_m$, we consider the subspace $S_m\subseteq \opk T^*M$ given
by
\begin{equation}\label{eq:Sm}
S_m:=\{\bar{\eta}\in \opk T_m^*M\,|\,\exists X\in D_m,\;
\bar{\eta}\big{|}_{\opk D_m}=i_X\w_m \},
\end{equation}
which has dimension $k(n-p) + p$, where $p$ is the dimension of
$D_m$. One may verify that $S_m^\circ = \{0\}$ and there is a well-defined map $P_m: S_m\to D_m\subseteq T_mM$,
\begin{equation}\label{eq:Pm}
P_m(\bar{\eta}) = X, \; \mbox{ where }\; \bar{\eta}\big{|}_{\opk
D_m}=i_X\w_m.
\end{equation}
Given now a {\it regular} poly-symplectic foliation on $M$, letting
$D$ be its tangent distribution, we use the previous pointwise
construction to see that \eqref{eq:Sm} defines a subbundle
$S\subseteq \opk T^*M$, satisfying $S^\circ=\{0\}$, and equipped
with a bundle map $P: S\to TM$. Moreover, using the fact that the
$\mathbb{R}^k$-valued form defined on each leaf is closed, it
follows that $(S,P)$ satisfies (iii) in Def.~\ref{Def:PP}, so it is
a poly-Poisson structure. In conclusion we have the following
proposition (see \cite[Sec.~3]{IMV}),
\begin{proposition}
If $(D,\w)$ is a regular $k$-poly-symplectic foliation on $M$ then
$(S,P)$, defined pointwise by \eqref{eq:Sm} and \eqref{eq:Pm}, is a
$k$-poly-Poisson on $M$.
\end{proposition}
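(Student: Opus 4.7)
The plan is to verify conditions (i), (ii), (iii) of Def.~\ref{Def:PP} for the pair $(S,P)$ defined by \eqref{eq:Sm}--\eqref{eq:Pm}, with the core of the argument for (iii) delegated to the leafwise poly-Poisson structures produced in Example~\ref{Ex:polysp}. Because the foliation $D$ is regular, formula \eqref{eq:Sm} cuts out a subbundle $S\subseteq \opk T^*M$ of locally constant rank $k(n-p)+p$ (where $p=\mathrm{rk}(D)$), and $P:S\to D\subseteq TM$ is smooth because it inverts the nondegenerate leafwise form followed by the inclusion $D\hookrightarrow TM$.

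Conditions (i) and (ii) are essentially pointwise. For (i), given $\bar{\eta}\in S_m$ let $X=P(\bar{\eta})\in D_m$; since $X\in D_m$, the pairing $i_X\bar{\eta}\in\mathbb{R}^k$ depends only on $\bar{\eta}|_{\opk D_m}=i_X\omega_m$, so antisymmetry of each component of $\omega_m$ gives $i_X i_X\omega_m=0$. For (ii), note that $S_m$ contains $\opk \Ann(D_m)$ (take $X=0$ in \eqref{eq:Sm}), so any $X\in S^\circ_m$ satisfies $\alpha(X)=0$ for every $\alpha\in\Ann(D_m)$, forcing $X\in D_m$. Then, choosing for each $Y\in D_m$ some extension $\bar{\eta}\in S_m$ of $i_Y\omega_m$, the condition $i_X\bar{\eta}=0$ reads $\omega_m(Y,X)=0$ componentwise; poly-symplectic nondegeneracy of $\omega_m$ on $D_m$ then yields $X=0$.

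The main work is condition (iii), and here the idea is to restrict to leaves. For $\bar{\eta}\in\Gamma(S)$, the vector field $P(\bar{\eta})$ is a section of $D$, hence tangent to every leaf. For each leaf $\iota_\mathcal{O}:\mathcal{O}\hookrightarrow M$, Example~\ref{Ex:polysp} attaches to $(\mathcal{O},\omega_\mathcal{O})$ a poly-Poisson structure $(S_{\omega_\mathcal{O}},P_{\omega_\mathcal{O}})$, and from the definitions
\begin{equation*}
\iota_\mathcal{O}^*\bar{\eta}=i_{P(\bar{\eta})|_\mathcal{O}}\,\omega_\mathcal{O}\in\Gamma(S_{\omega_\mathcal{O}}),\qquad P_{\omega_\mathcal{O}}(\iota_\mathcal{O}^*\bar{\eta})=P(\bar{\eta})|_\mathcal{O}.
\end{equation*}
Since $P(\bar{\eta})$ and $P(\bar{\gamma})$ are tangent to $\mathcal{O}$, the operators $\Lie$, $d$, and contraction commute with $\iota_\mathcal{O}^*$, so
\begin{equation*}
\iota_\mathcal{O}^*\lfloor \bar{\eta},\bar{\gamma}\rfloor=\lfloor \iota_\mathcal{O}^*\bar{\eta},\iota_\mathcal{O}^*\bar{\gamma}\rfloor_{\omega_\mathcal{O}}.
\end{equation*}
By Example~\ref{Ex:polysp}, the right-hand side lies in $\Gamma(S_{\omega_\mathcal{O}})$ and equals $i_{[P(\bar{\eta}),P(\bar{\gamma})]|_\mathcal{O}}\,\omega_\mathcal{O}$. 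Running this over the (unique) leaf through each $m\in M$, this says precisely that $\lfloor \bar{\eta},\bar{\gamma}\rfloor|_m\big{|}_{\opk D_m}=i_Y\omega_m$ with $Y=[P(\bar{\eta}),P(\bar{\gamma})]_m$, so $\lfloor \bar{\eta},\bar{\gamma}\rfloor\in\Gamma(S)$ and $P(\lfloor \bar{\eta},\bar{\gamma}\rfloor)=[P(\bar{\eta}),P(\bar{\gamma})]$. Invoking Remark~\ref{rem:integcond} closes (iii).

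The main obstacle is the leafwise reduction in step three: one has to verify both that membership in $\Gamma(S)$ is detected by the restrictions $\iota_\mathcal{O}^*\bar{\eta}|_{\opk D_m}$, and that the operations $\Lie$, $d$ and contraction appearing in the bracket all commute with $\iota_\mathcal{O}^*$. Both facts hinge on the regularity of $D$ and on $P$ taking values in $D$, which makes the hamiltonian-type vector fields tangent to leaves.
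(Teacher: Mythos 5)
Your proposal is correct and follows essentially the same route as the paper, which only sketches the argument in the paragraph preceding the proposition: regularity gives the subbundle of rank $k(n-p)+p$, the pointwise checks give (i), (ii) and the well-definedness of $P$, and (iii) is reduced to the closedness of the leafwise forms. Your write-up usefully makes explicit the two points the paper leaves implicit — that $\opk\Ann(D)\subseteq S$ forces $S^\circ\subseteq D$, and that the bracket $\lfloor\cdot,\cdot\rfloor$ commutes with restriction to leaves so that Example~\ref{Ex:polysp} and Remark~\ref{rem:integcond} finish condition (iii) — but the underlying argument is the same.
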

In particular, if the regular $k$-poly-symplectic foliation on $M$ comes from
a weak-poly-Poisson structure as in \cite[Theorem 3.4]{IMV}, then the poly-Poisson
structure on the proposition is an ``extension'' of the weak-poly-Poisson structure 
In order to illustrate last claim and the poly-Poisson structure from \eqref{eq:Sm} and
\eqref{eq:Pm} we apply the proposition to the regular poly-symplectic foliation given in
Example \ref{Ex:PSfoliation is not unique}, which is the same for each poly-Poisson strucutres
$(S_j,P_j)$ for $j=1,2,3$ and for the weak-poly-Poisson $(S_0,P_0)$, and
get the ``maximal'' poly-Poisson structure $(S_1,P_1)$.

\subsection{Relation with AV-Dirac structures}

It is well known that Poisson structures on $M$ can be understood as
special types of Dirac structures in the Courant algebroid $TM\oplus
T^*M$ \cite{Co}. As we now see, this picture can be generalized to
poly-Poisson structures. We consider the bundle
$\mathbb{A}:=TM\oplus (\opk \TM)$, equipped with the
($\mathbb{R}^k$-valued) fibrewise inner product
$$
\langle X\oplus \bar{\eta}, Y\oplus \bar{\gamma}\rangle
:=i_X\bar{\gamma}+i_Y\bar{\eta},
$$
and bracket on sections of $\mathbb{A}$ given by
$$
\ca X\oplus \bar{\eta}, Y\oplus \bar{\gamma} \cc
:= [X,Y]\oplus \Lie_X \bar{\gamma} - i_Yd\bar{\eta}.
$$
For $k=1$, this is the standard Courant algebroid $TM\oplus T^*M$.
In general, this is a very particular case of the {\it AV-Courant
algebroids} introduced in \cite[Sec.~2]{LiB} (with respect to the
Lie algebroid $A=TM$ and representation on $V=M\times \mathbb{R}^k
\to M$ given by the Lie derivative $\Lie_X(f_1,\dots
,f_k)=(\Lie_Xf_1,\dots ,\Lie_Xf_k)$ on
$C^{\infty}(M,\mathbb{R}^k)$).

Following \cite{LiB}, one may consider {\it AV-Dirac structures} on
any AV-Courant algebroid: these are subbundles $L\subseteq
\mathbb{A}$ which are {\it lagrangian}, i.e.,
\begin{equation}\label{eq:lag}
L=L^\perp,
\end{equation}
with respect to the fibrewise inner product, and which are
involutive with respect to the bracket  $\ca \cdot,\cdot \cc$ on
$\Gamma(\mathbb{A})$. Recall that $L$ is called {\it isotropic} if
$L\subset L^\perp$.

\begin{example}
Any $k$-poly-symplectic structure $\w$ on $M$ may be seen as an
AV-Dirac structure in $\mathbb{A}:=TM\oplus (\opk \TM)$ via
$$
L:=\mathrm{graph}(\w)=\{X\oplus i_X\w\,|\,X\in TM\}.
$$
Note that this $L$ satisfies the additional condition
\begin{equation}\label{eq:trans1}
L\cap (\opk \TM)=\{0\}.
\end{equation}
In fact, poly-symplectic structures on $M$ are in one-to-one correspondence with
AV-Dirac structures which project isomorphically over $TM$ and satisfy
 $L\cap TM=\{0\}$ and \eqref{eq:trans1}.
\end{example}

Our goal now is to define, in the same way, a subbundle $L$ from a
poly-Poisson structure $(S,P)$, i.e. consider
\[
L=\{P(\bar{\eta})\oplus \bar{\eta}|\bar{\eta}\in S\}.
\]
Note that $L$ is isotropic as a consequence of (i) in Definition
\ref{Def:PP}. But, as we now see, the lagrangian condition generally
fails.

\begin{example} Let $\LG$ be a Lie algebra and consider the poly-Poisson
structure on $\LG_{(2)}^*$ as in Example \ref{Ex:Lalgb_k}. Observe
that  $L$ over the point $\zeta=(0,0)\in \LG_{(2)}^*$  can be
written as
\[
L_\zeta=\{(0,0)\oplus ((u,0),(0,u))|u\in \LG\}.
\]
But for any $v_1,v_2,w_1,w_2\in \LG$ we have
$(0,0)\oplus((v_1,v_2),(w_1,w_2))\in L^{\perp}$, hence $L$ is
properly contained in $L^\perp$.
\end{example}

Therefore, in general, poly-Poisson structures are not AV-Dirac
structures. In order to include poly-Poisson structures in the
formalism of AV-Courant algebroids, one then needs to relax the
lagrangian condition \eqref{eq:lag}.

Let us consider subbundles $L\subseteq TM\oplus (\opk \TM)$
satisfying
\begin{equation}\label{eq:wlag}
L = L^\perp \cap (L+TM).
\end{equation}
Note that \eqref{eq:lag} implies that \eqref{eq:wlag} holds, but the
converse is not true.


The following results characterize $k$-poly-Poisson structures as
subbundles of $\mathbb{A} = TM\oplus (\opk \TM)$:

\begin{proposition}\label{eq:charactL}
There is a one-to-one correspondence among the following:
\begin{itemize}
\item[(a)] $k$-poly-Poisson structures $(S,P)$ on $M$,
\item[(b)] Involutive, isotropic subbundles $L\subset \mathbb{A}$ satisfying $L^\perp\cap
TM=\{0\}$,
\item[(c)] Involutive subbundles $L\subset \mathbb{A}$ satisfying
\eqref{eq:wlag} and $L\cap TM=\{0\}$.
\end{itemize}
\end{proposition}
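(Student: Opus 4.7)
The plan is to go around the cycle $(a)\Rightarrow(b)\Rightarrow(c)\Rightarrow(b)\Rightarrow(a)$, using the graph construction $L=\{P(\bar\eta)\oplus\bar\eta\mid\bar\eta\in S\}$ to move between $(a)$ and $(b)$, and a short linear-algebra argument to move between $(b)$ and $(c)$.

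First, for $(a)\Rightarrow(b)$, given a poly-Poisson structure $(S,P)$ I define $L$ as above. Condition (i) of Def.~\ref{Def:PP} combined with polarization yields $i_{P(\bar\eta)}\bar\gamma+i_{P(\bar\gamma)}\bar\eta=0$, which is exactly the isotropy $L\subset L^\perp$. For $L^\perp\cap TM=\{0\}$: if $X\in TM$ pairs trivially with $L$, then $i_X\bar\eta=0$ for every $\bar\eta\in S$, so $X\in S^\circ=\{0\}$ by (ii). For involutivity, I compute
\[
\ca P(\bar\eta)\oplus\bar\eta,\,P(\bar\gamma)\oplus\bar\gamma\cc
=[P(\bar\eta),P(\bar\gamma)]\oplus\bigl(\Lie_{P(\bar\eta)}\bar\gamma-i_{P(\bar\gamma)}d\bar\eta\bigr);
\]
the second summand is $\lfloor\bar\eta,\bar\gamma\rfloor\in\Gamma(S)$ by (iii), and the first equals $P(\lfloor\bar\eta,\bar\gamma\rfloor)$ by the anchor property \eqref{eq:brk}, so the bracket lies in $\Gamma(L)$.

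For $(b)\Rightarrow(a)$, I start from an involutive isotropic $L$ with $L^\perp\cap TM=\{0\}$ and recover $(S,P)$ by projection. The key point is that the isotropy inclusion $L\subset L^\perp$ gives $L\cap TM\subset L^\perp\cap TM=\{0\}$, so the projection $\mathrm{pr}_2\colon L\to\opk T^*M$ is fibrewise injective; its image $S$ is therefore a subbundle and $L$ is the graph of a well-defined map $P\colon S\to TM$. Then the three conditions of Def.~\ref{Def:PP} are read off from the three properties of $L$ in exactly the way described above: isotropy of $L$ gives (i), the hypothesis $L^\perp\cap TM=\{0\}$ gives $S^\circ=\{0\}$, i.e.\ (ii), and involutivity of $L$ together with the graph form gives both closure of $\Gamma(S)$ under $\lfloor\cdot,\cdot\rfloor$ and the relation \eqref{eq:brk}, which by Remark~\ref{rem:integcond} is equivalent to (iii).

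Finally, the equivalence $(b)\Leftrightarrow(c)$ is a pointwise linear-algebra statement that does not touch the bracket. If $L$ satisfies $(b)$, then $L\cap TM\subset L^\perp\cap TM=\{0\}$, and for the identity $L=L^\perp\cap(L+TM)$: the inclusion $\subset$ is immediate, while any $v=\ell+X\in L^\perp\cap(L+TM)$ with $\ell\in L$, $X\in TM$ satisfies $X=v-\ell\in L^\perp$ (using $L\subset L^\perp$), hence $X\in L^\perp\cap TM=\{0\}$, forcing $v=\ell\in L$. Conversely, if $L$ satisfies $(c)$, then $L\subset L^\perp\cap(L+TM)\subset L^\perp$, so $L$ is isotropic; and for $X\in L^\perp\cap TM$ one has $X\in L^\perp\cap(L+TM)=L$, hence $X\in L\cap TM=\{0\}$. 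The only real subtlety to keep in mind across the argument is that the pairing is $\mathbb{R}^k$-valued, so dimension-counting tricks familiar from the scalar Courant algebroid case are unavailable; this is the step I expect to be most easily mishandled, and it is circumvented throughout by using the graph picture and the transversality conditions $L\cap TM=\{0\}$, $L^\perp\cap TM=\{0\}$ directly rather than dimensional arguments.
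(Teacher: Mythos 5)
Your proposal is correct and follows essentially the same route as the paper: the graph construction $L=\{P(\bar\eta)\oplus\bar\eta\}$ mediates between (a) and (b), with (i), (ii), (iii) matching isotropy, $L^\perp\cap TM=\{0\}$, and involutivity respectively, and the passage to (c) is the same pointwise linear algebra (the paper phrases its (b)$\Rightarrow$(c) step via the poly-Poisson data rather than purely in terms of $L$, but the content is identical). Your version is somewhat more detailed — in particular the explicit verification of $L=L^\perp\cap(L+TM)$ and the remark that $\mathbb{R}^k$-valued pairings forbid dimension counts — but introduces no new idea.
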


\begin{proof}

Given a $k$-poly-Poisson structure $(S,P)$, we define the subbundle
$L\subset \mathbb{A}$ by
\begin{equation}\label{eq:LP}
L=\{P(\bar{\eta})\oplus \bar{\eta}\,|\,\bar{\eta}\in S\}.
\end{equation}
This bundle is isotropic by condition (i) in Def.~\ref{Def:PP},
condition (ii) amounts to $L^\perp\cap TM=\{0\}$ while (iii) is
equivalent to the involutivity of $L$. Conversely, given $L$ as in
(b), the image of the natural projection $L\to \opk T^*M$ defines a
vector bundle $S$ and a bundle map $P: S\to TM$ by
$$
P(\bar{\eta})=X \;\mbox{ if and only if }\; X\oplus \bar{\eta}\in L,
$$
in such a way that $(S,P)$ is a $k$-poly-Poisson structure. This
gives the correspondence between (a) and (b).

For a $k$-poly-Poisson structure $(S,P)$ and $L$ as in
\eqref{eq:LP}, one may directly verify that (i) in Def.~\ref{Def:PP}
implies that \eqref{eq:wlag} holds, while (ii) implies that $L\cap
TM=\{0\}$, so $L$ satisfies the properties in (c). It remains to
check that given an $L$ as in (c), then it satisfies the properties
described in (b). Note that \eqref{eq:wlag} implies that $L$ is
isotropic and that $L\cap TM=L^\perp\cap TM$, so that $L^\perp\cap
TM=\{0\}$.

\end{proof}
\begin{remark} For $k=1$, the objects in (b) and (c) are just usual Dirac
structures on $M$, satisfying the additional condition $L\cap
TM=\{0\}$ (conditions \eqref{eq:lag} and \eqref{eq:wlag} turn out to
be equivalent for $k=1$), while the objects in (a) are usual Poisson
structures. So for $k=1$ Prop.~\ref{eq:charactL} boils down to the
known characterization of Poisson structures as particular types of
Dirac structures.
\end{remark}


\section{Symmetries and reduction}\label{sec:sym}

We now discuss poly-Poisson structures and poly-symplectic groupoids
in the presence of symmetries, with the aim of using reduction as a
tool for integration of poly-Poisson manifolds, along the lines of
\cite{MiWe,FOR}.


\subsection{Poly-Poisson actions}

An action $\varphi$ of a Lie group $\G$ on a $k$-poly-Poisson
manifold $(M,S,P)$ is a \textit{poly-Poisson action} if for each
$g\in \G$ the diffeomorphism $\varphi_g:M\to M$ is a poly-Poisson
morphism (Def. \ref{Def:mor}). In the case of $k$-poly-symplectic
manifold $(M,\omega)$, this means that $\varphi_g^*\omega=\omega$,
see Example~\ref{Ex:polysp}.



Let us consider a poly-Poisson action $\varphi$ of a Lie group $\G$
on $(M,S,P)$, and let us assume henceforth that this action is free
and proper, so that we have a principal $\G$-bundle:
\begin{equation}\label{eq:proj}
\Pi:M\to M/\G.
\end{equation}
Let $V\subseteq TM$ denote the vertical bundle defined by this
action.

It is well-known that, when $k=1$,  i.e.,  $M$ is an ordinary
Poisson manifold, $M/\G$ inherits a Poisson structure for which
$\Pi$ is a Poisson map. For poly-Poisson manifolds, we will need
additional conditions. We call the action $\varphi$ is
\textit{reducible} if
\begin{equation}\label{Def:p.reductible}
\begin{cases}
(a)\ S\cap \oplus_k \Ann(V) \mbox{\ \ has constant rank\ }, \\
(b)\ (S\cap \oplus_k \Ann(V))^\circ\subset V.
\end{cases}
\end{equation}
The projection map \eqref{eq:proj} induces a map $d\Pi_{(k)}: \opk
TM\to \Pi^*(\opk T(M/\G))$, and its transpose is an injective bundle
map $\Pi^*(\opk T^*(M/\G))\to \opk \TM$, whose image is the
subbundle $\opk \Ann(V) \subseteq \opk \TM$. So we have an induced
isomorphism
\begin{equation}\label{eq:transpose}
\Pi^*(\opk T^*(M/\G))\stackrel{\sim}{\to} \opk \Ann(V).
\end{equation}

The next result is analogous to \cite[Thm.~4.1]{IMV} (but stated for
our stronger notion of poly-Poisson structure).

\begin{theorem}\label{Thm:PPred}
Let us consider a poly-Poisson $\G$-action on a $k$-poly-Poisson
manifold $(M,S,P)$ which is free and proper, and reducible. Then
$M/\G$ inherits a $k$-poly-Poisson  structure $(S_{red},P_{red})$, where the
subbundle $S_{red} \subseteq \opk T^*(M/\G)$ corresponds to $S\cap \opk
\Ann(V)$ via \eqref{eq:transpose}, and $P_{red}$ is unique so that the
quotient map \eqref{eq:proj} is a $k$-poly-Poisson morphism.

\end{theorem}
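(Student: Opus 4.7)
The plan is to obtain $(S_{red},P_{red})$ by descent from $M$ along $\Pi\colon M\to M/\G$. Since the $\G$-action preserves the poly-Poisson structure, the subbundle $S\subseteq \opk \TM$ is $\G$-invariant; the vertical bundle $V$, and hence $\opk \Ann(V)$, are also $\G$-invariant. Hypothesis (a) in \eqref{Def:p.reductible} then says $S\cap \opk \Ann(V)$ is a $\G$-invariant subbundle of $\opk \Ann(V)$ of constant rank, and the $\G$-equivariant isomorphism \eqref{eq:transpose} carries it to the desired subbundle $S_{red}\subseteq \opk T^*(M/\G)$; in particular $\Pi^*$ identifies $\Gamma(S_{red})$ with the $\G$-invariant sections of $S\cap \opk \Ann(V)$.

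For the anchor, the requirement that $\Pi$ be a poly-Poisson morphism (Def.~\ref{Def:mor}(b)) forces
\begin{equation*}
P_{red}(\bar\alpha)_y := d\Pi|_x\bigl(P(\bar\eta_x)\bigr),\qquad \bar\eta_x:=(\Pi^*\bar\alpha)_x,
\end{equation*}
for any $x\in \Pi^{-1}(y)$ and $\bar\alpha\in (S_{red})_y$; uniqueness follows from Remark~\ref{rem:unique}. The main point to check here is well-definedness along the $\G$-orbit through $x$: from the $\G$-invariance of $\Pi^*\bar\alpha$ one has $\varphi_g^*\bar\eta_{\varphi_g(x)}=\bar\eta_x$, and Def.~\ref{Def:mor}(b) applied to $\varphi_g$ yields $P(\bar\eta_x)=d\varphi_g^{-1}\,P(\bar\eta_{\varphi_g(x)})$; composing with $d\Pi$ and using $\Pi\circ\varphi_g=\Pi$ shows that both expressions project to the same vector in $T_y(M/\G)$. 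This is the principal technical obstacle; the remaining conditions follow formally.

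It remains to verify (i)--(iii) of Def.~\ref{Def:PP}, each of which descends by pullback from the corresponding property of $(S,P)$. For (i), $\Pi^*(i_{P_{red}(\bar\alpha)}\bar\alpha)=i_{P(\bar\eta)}\bar\eta=0$ and $\Pi^*$ is injective on forms. Condition (ii), $(S_{red})^\circ=\{0\}$, is precisely the translation of (b) in \eqref{Def:p.reductible}: if $X\in T_y(M/\G)$ annihilates $(S_{red})_y$, then for any lift $\tilde X\in T_xM$ and any $\bar\eta=(\Pi^*\bar\alpha)_x\in(S\cap\opk\Ann(V))_x$ one has $\bar\eta(\tilde X)=\bar\alpha(X)=0$, forcing $\tilde X\in V_x$ and hence $X=0$. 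For (iii), the $\Pi$-relatedness of $P(\bar\eta)$ with $P_{red}(\bar\alpha)$ gives
\begin{equation*}
\Pi^*\lfloor \bar\alpha,\bar\beta\rfloor \;=\; \Lie_{P(\bar\eta)}\bar\gamma - i_{P(\bar\gamma)}d\bar\eta \;=\; \lfloor \bar\eta,\bar\gamma\rfloor,
\end{equation*}
a $\G$-invariant section of $S\cap\opk \Ann(V)$ by integrability of $S$; thus $\lfloor \bar\alpha,\bar\beta\rfloor\in\Gamma(S_{red})$. Finally, the Jacobi identity is obtained via Remark~\ref{rem:integcond}: $\Pi$-relatedness of $[P(\bar\eta),P(\bar\gamma)]$ with $[P_{red}(\bar\alpha),P_{red}(\bar\beta)]$, combined with \eqref{eq:brk} upstairs, descends \eqref{eq:brk} to $(S_{red},P_{red})$.
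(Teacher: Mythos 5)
Your proposal is correct and follows essentially the same route as the paper's proof: condition (a) of reducibility gives the subbundle $S_{red}$ via \eqref{eq:transpose}, $P_{red}$ is defined by the descent composition and $\G$-invariance, condition (b) yields $(S_{red})^\circ=\{0\}$, and (i) and (iii) descend by pullback. You merely spell out the steps the paper declares routine (the well-definedness of $P_{red}$ along orbits and the closure of $\Gamma(S_{red})$ under the bracket), which is consistent with the paper's argument.
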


\begin{proof}
The first condition in \eqref{Def:p.reductible} guarantees that
$S_{red}\subseteq \opk T^*(M/\G)$, defined by the condition that
$\Pi^*S_{red}$ is isomorphic to $S\cap \opk \Ann(V)$ under
\eqref{eq:transpose}, is a vector subbundle. Note that we have a
natural map $\Pi^*(S_{red})\to \Pi^*(T(M/\G))$ given by the composition
\begin{equation}\label{eq:compos}
\Pi^*(S_{red})\stackrel{d\Pi^*_{(k)}}{\longrightarrow} S\cap \opk
\Ann(V) \stackrel{P}{\longrightarrow}
TM\stackrel{d\Pi}{\longrightarrow} \Pi^*(T(M/\G)),
\end{equation}
and this defines a bundle map
\begin{equation}\label{eq:Pr}
P_{red}: S_{red}\to T(M/\G)
\end{equation}
as a consequence of the $\G$-invariance of $(S,P)$.

To check that $(S_{red},P_{red})$ defines a $k$-poly-Poisson structure on
$M/\G$, one must verify that it satisfies conditions (i), (ii), and
(iii) in Def.~\ref{Def:PP}. Condition (i) follows directly from the
definition of $(S_{red},P_{red})$ and the fact that this condition is
satisfied by $(S,P)$. It is also routine to check that condition
(iii) holds for $(S_{red},P_{red})$, given that it holds for $(S,P)$.

As for condition (ii), it is a consequence of property (b) in
\eqref{Def:p.reductible}. Indeed, by the way $S_{red}$ is defined, the
fact that $\bar{X} \in S_{red}^\circ$ implies that $\bar{X}=d\Pi(X)$,
for $X\in (S\cap \opk \Ann(V))^\circ$. But then (b) in
\eqref{Def:p.reductible} implies that $\bar{X}=d\Pi(X)=0$.

It is also clear from the definition of $P_{red}$ that $\Pi$ is a
poly-Poisson map.

\end{proof}

We mention two concrete examples, discussed in \cite{IMV}.

\begin{example}\label{ex:ppred}
\
\begin{itemize}
\item[(a)]
Let $Q$ be a manifold equipped with a free and proper $\G$-action,
and let $(M=\oplus_{(k)}T^*Q, \w)$  be the poly-symplectic manifold
of Example~\ref{Ex:polysymplectic}. We keep the notation $\pr_j:
M\to T^*Q$ for the natural projection onto the $j$th-factor. The
cotangent lift of the $\G$-action on $Q$ defines an action on
$T^*Q$, which induces a $\G$-action on $M$ which preserves the
poly-symplectic structure (i.e., it is a poly-Poisson action), and
there is a natural identification
$$
M/\G \cong \oplus_{(k)} (T^*Q/\G).
$$
We observe here that both conditions in \eqref{Def:p.reductible}
hold, i.e., the $\G$-action on $M$ is reducible. To verify this
fact, let $V\subseteq TM$ be the vertical bundle of the $\G$-action
on $M$, so that $V_j=d\pr_j(V) \subseteq T(T^*Q)$ is the vertical
bundle of the $\G$-action on the $j$th-factor $T^*Q$. Note that the
natural projection $T^*Q\to Q$ induces a projection of
$V_j^{\w_{can}}$ onto $TQ$, and one then sees that
$$
V_1^{\w_{can}} \times_{TQ}\ldots \times_{TQ} V_k^{\w_{can}}\subseteq
T(T^*Q) \times_{TQ}\ldots \times_{TQ} T(T^*Q)= TM
$$
is a vector subbundle, that we denote by $W$. One can now check that
\begin{equation}\label{eq:Scovel}
S_\w \cap \opk \Ann(V)=\{i_X\w\,|\, X\in W\},
\end{equation}
from where one concludes that condition (a) of
\eqref{Def:p.reductible} holds. From \eqref{eq:Scovel}, one directly
sees that
\begin{align*}
(S_\w\cap \opk \Ann(V))^\circ &= (V_1^{\w_{can}})^{\w_{can}}
\times_{TQ}\ldots \times_{TQ} (V_k^{\w_{can}})^{\w_{can}}\\
& = V_1 \times_{TQ}\ldots \times_{TQ} V_k = V,
\end{align*}
showing that (b) of \eqref{Def:p.reductible} also holds. So the
action is reducible. As shown in \cite[Ex.~4.3]{IMV}, the reduced
poly-Poisson structure on $\oplus_{(k)} (T^*Q/\G)$ is the one
defined by direct-sum of the natural linear Poisson structure on
$T^*Q/\G$ (dual to the Atiyah algebroid $TQ/\G$ of the principal
bundle $Q\to Q/\G$).

\item[(b)] In the particular case of $Q=\G$ with the action by left
multiplication, as shown in \cite[Ex.~4.2]{IMV}, the poly-Poisson
reduction of $\opk T^*\G$ with respect to the lifted $\G$-action is
identified with $\LG^*_{(k)}$ of Example~\ref{Ex:Lalgb_k}.
\end{itemize}
\end{example}

\subsection{Hamiltonian actions on poly-symplectic manifolds}

We now consider poly-Poisson actions on poly-symplectic manifolds in
the presence of moment maps.

Let $(M,\w)$ be a $k$-poly-symplectic manifold equipped with a
poly-Poisson action of $\G$, denoted by $\varphi$. Consider the
diagonal coadjoint action of $\G$ on the space $\LG^*_{(k)}$. This action is called {\em
hamiltonian} \cite{Gu,MR-RSV} if there is a {\em moment map}, i.e., a map $J:M\to
\LG^*_{(k)}$ that satisfies
\begin{equation}\label{Def:HamPPaction}
\begin{tabular}{rlclr}
(i)&$J\circ \varphi_g=\Ad_g^*\circ J$& and &(ii)&$i_{u_M}\w=d\langle
J,u\rangle$.\\
\end{tabular}
\end{equation}
for all $u\in \LG$. Here $u_M\in \mathfrak{X}(M)$ denotes the
infinitesimal generator corresponding to $u\in \LG$.

\begin{example}\label{Ex:MMexct.mfd}
Let $(M,\w)$ be a poly-symplectic manifold such that $\w=-d\theta $,
and assume that $\G$ acts on $M$ preserving the 1-form $\theta$.
Then the maps $J_1,\dots J_k:M\to \LG^*$ defined by $\langle
J_l,u\rangle=\theta_l(u_M)$, $u\in \LG$, define a moment map for the
action.

A particular case of this example is when $M=\opk T^*Q$ (as in
Example~\ref{Ex:polysymplectic}) and the action of $\G$ on $M$ is
the lift of an action on $Q$, see Example~\ref{ex:ppred}(a). Here
the moment map is $\langle J(\bar{\eta}),u\rangle =(\langle
\eta_j,u_Q\rangle)_{j=1,\dots,k}$.
\end{example}

The following observation generalizes a well-known fact in Poisson
geometry. Consider $\LG_{(k)}^*$ with the poly-Poisson structure of
Example \ref{Ex:Lalgb_k}.

\begin{proposition}\label{Prop:MMisPPmorphism}
The moment map $J:M\to \LG_{(k)}^*$ of a hamiltonian action of $\G$
on $(M,\w)$ is a poly-Poisson morphism.
\end{proposition}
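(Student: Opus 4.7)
The plan is to verify the two conditions of Definition \ref{Def:mor} for the map $J: M \to \LG_{(k)}^*$, viewing $(M,\w)$ as the poly-Poisson manifold $(M, S_\w, P_\w)$ from Example \ref{Ex:polysp} and $\LG_{(k)}^*$ with the direct-sum poly-Poisson structure of Example \ref{Ex:Lalgb_k}. Conditions (a) and (b) of Def.~\ref{Def:mor} will turn out to correspond exactly to conditions (ii) and (i) of \eqref{Def:HamPPaction}, respectively.

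For condition (a), I would pick $x\in M$ and a section $\bar{\eta}\in S|_{J(x)}$, which by definition has the form $\bar{\eta}=(u_1,\ldots,u_k)$ for some $u\in\LG$, where $u_j\in\LG_{(k)}$ denotes the element with $u$ in the $j$-th slot and zeros elsewhere. Under the identification $T^*_{J(x)}\LG_{(k)}^*\cong \LG_{(k)}$, the $j$-th component of $J^*\bar{\eta}$ is $dJ_x^*(u_j) = d\langle J_j,u\rangle$ at $x$, so $J^*\bar{\eta} = d\langle J,u\rangle \in \opk T^*_xM$. By condition (ii) of \eqref{Def:HamPPaction}, this equals $i_{u_M}\w$, which lies in $S_\w = \mathrm{Im}(\w^\flat)$ by Example~\ref{Ex:polysp}.

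For condition (b), the computation from the first step gives $P_\w(J^*\bar{\eta}) = P_\w(i_{u_M}\w) = u_M|_x$, so it suffices to show that $dJ_x(u_M|_x) = P_{J(x)}(\bar{\eta})$. Differentiating the equivariance identity $J\circ\varphi_g = \Ad_g^*\circ J$ at $g=e$ in direction $u\in\LG$ yields
\[
dJ_x(u_M|_x) = (\ad_u^* J_1(x),\ldots,\ad_u^* J_k(x)),
\]
since the infinitesimal generator of the diagonal coadjoint action at $\zeta=(\zeta_1,\ldots,\zeta_k)$ is $(\ad_u^*\zeta_1,\ldots,\ad_u^*\zeta_k)$. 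By the definition of $P$ in Example~\ref{Ex:Lalgb_k}, the right-hand side is precisely $P_{J(x)}(u_1,\ldots,u_k) = P(\bar{\eta})$.

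No serious obstacle arises: the proof is a direct unpacking of the definitions in Examples~\ref{Ex:polysp} and \ref{Ex:Lalgb_k} together with the two hamiltonian-action axioms. The only point requiring care is the bookkeeping for the $k$-tuple structure of $S$ in $\LG_{(k)}^*$ and the identification $T^*_\zeta \LG_{(k)}^*\cong \LG_{(k)}$, so that pulling back the generators $(u_1,\ldots,u_k)$ of $S$ yields the $\mathbb{R}^k$-valued exact form $d\langle J,u\rangle$ matching the right-hand side of the moment map equation.
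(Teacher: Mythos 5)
Your proposal is correct and follows essentially the same route as the paper's proof: condition (a) of Def.~\ref{Def:mor} is obtained from the moment-map equation $i_{u_M}\w=d\langle J,u\rangle$, which identifies $J^*(u_1,\dots,u_k)$ with $i_{u_M}\w\in\mathrm{Im}(\w^\flat)$, and condition (b) from differentiating the equivariance $J\circ\varphi_g=\Ad_g^*\circ J$ to get $dJ(u_M)=(\ad_u^*\zeta_j)_j=P_\zeta(u_1,\dots,u_k)$. No gaps; this matches the paper's argument step for step.
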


\begin{proof}
Denote the poly-Poisson structure on $\LG_{(k)}^*$ by $(S,P)$, as in
Example \ref{Ex:Lalgb_k}.  Consider $(u_1,\dots ,u_k)\in S|_\zeta$,
and $Y\in T_xM$ with $J(x)=\zeta$. By condition (ii) in
\eqref{Def:HamPPaction} we have
\begin{align*}
(J^*(u_1,\dots ,u_k))(Y)&=(u_1,\dots ,u_k)(dJ(Y))=(\langle dJ(Y),u_j\rangle)_{j=1,\dots ,k}\\
&=(\langle dJ_j(Y),u\rangle)_{j=1,\dots ,k}=(i_{u_M}\w)\big{|}_x(Y),
\end{align*}
hence $(J^*(u_1,\dots ,u_k))=i_{u_M}\w\big{|}_x\in
\mathrm{Im}(\w^\flat)\big{|}_x$.

Recall the bundle maps of the poly-Poisson structures on $M$ and
$\LG_{(k)}^*$:
$$
P_\w(J^*(u_1,\dots
,u_k))\big{|}_x=P_\w(i_{u_M}\w\big{|}_x)=u_M\big{|}_x,
$$
$$
P|_\zeta(u_1,\dots ,u_k)=(\mathrm{ad}_u^*\zeta_j)_{j=1,\dots
,k}=(u_{\LG^*}(\zeta_j))_{j=1,\dots ,k}.
$$
>From condition (i) in \eqref{Def:HamPPaction} we can derive that
$dJ_j(u_M(x))=u_{\LG^*}(\zeta_j)$, therefore on points $\zeta=J(x)$
we obtain
$$
dJ(P_\w|_x(J^*(u_1,\dots ,u_k))=(dJ_j(u_M(x)))=(u_{\LG^*}(\zeta_j))
=P|_{\zeta}(u_1,\dots ,u_k).
$$
\end{proof}

Let us consider a Hamiltonian $\G$-action on a $k$-poly-symplectic
manifold $(M,\w)$, with moment map $J:M\to \LG^*_{(k)}$. Let
$\zeta\in \LG_{(k)}^*$ be a \textit{clean value} for $J$, i.e.,
\begin{equation}\label{Def:cleanvalue}
\begin{cases}
 J^{-1}(\zeta) \text{ \ is a submanifold of\ }M,\\
 \ker(d_xJ)=T_xJ^{-1}(\zeta), \text{ \ for all \ } x\in  J^{-1}(0).
\end{cases}
\end{equation}
The submanifold $J^{-1}(\zeta)$ is invariant by the action of
$\G_\zeta$, the isotropy group of $\zeta$ with respect to the
diagonal coadjoint action. We assume that the $\G_\zeta$-action on
$J^{-1}(\zeta)$ is free and proper, so we can consider the reduced
manifold
$$
M_\zeta:=J^{-1}(\zeta)/\G_{\zeta}.
$$
We let $\Pi_\zeta:J^{-1}(\zeta)\to M_\zeta$ be the natural
projection map, and $i_\zeta:J^{-1}(\zeta) \to M$ the inclusion. We
denote by $V_\zeta \subseteq T J^{-1}(\zeta)$ the vertical bundle
with respect to the $\G_\zeta$-action. It follows from (i) in
\eqref{Def:HamPPaction} that $V_\zeta = V\cap TJ^{-1}(\zeta)$, while
(ii) implies that
\begin{equation}\label{eq:Vzeta}
V_\zeta \subseteq \ker(i_\zeta^*\omega).
\end{equation}
This last condition, together with the $\G_\zeta$-invariance of
$i_\zeta^*\omega$, implies that $i_\zeta^*\omega$ is basic, i.e.,
there exists a (unique) closed form $\w_{red}\in \Omega^2(M_\zeta,
\mathbb{R}^ k)$ so that
\begin{equation}\label{eq:wr}
\Pi_\zeta^*\w_{red}=i_\zeta^*\w.
\end{equation}
In general, however, the form $\omega_{red}$ fails to be
poly-symplectic, as it may be degenerate; indeed, it is
nondegenerate if and only if we have an equality in
\eqref{eq:Vzeta}.

Note that (ii) in \eqref{Def:HamPPaction} says that
$$
TJ^{-1}(\zeta) = \ker(dJ) = V^\omega,
$$
where, for a subbundle $W\subseteq TM$, we use the notation
$W^\omega=\{Y\in TM ,|\, \w(X,Y)=0 \; \forall X\in W\}$. Writing
$S=\mathrm{Im}(\w^\flat)$, one may also check that

$$
(\ker(dJ))^\omega = (V^\omega)^\omega = (S\cap \oplus_k
\Ann(V))^\circ,
$$
from where we conclude that
$$
\ker(i_\zeta^*\omega) = (TJ^{-1}(\zeta))^\omega\cap TJ^{-1}(\zeta) =
(S\cap \oplus_k \Ann(V))^\circ \cap T J^{-1}(\zeta).
$$
Comparing with \eqref{eq:Vzeta}, we conclude the following:

\begin{proposition}\label{prop:MW}
The reduced form $\omega_{red} \in \Omega^2(M_\zeta, \mathbb{R}^ k)$
defined by \eqref{eq:wr} is poly-symplectic if and only if
\begin{equation}\label{eq:redpoly}
(S\cap \oplus_k \Ann(V))^\circ\cap TJ^{-1}(\zeta) \subseteq V_\zeta
= V\cap TJ^{-1}(\zeta).
\end{equation}
\end{proposition}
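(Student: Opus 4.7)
The plan is to observe that almost all the work has already been done in the paragraphs preceding the proposition; what remains is a clean assembly. First I would note that $\omega_{red}$ is automatically closed: since $\omega$ is closed, so is $i_\zeta^*\omega = \Pi_\zeta^*\omega_{red}$, and closedness descends through the surjective submersion $\Pi_\zeta$ because $\Pi_\zeta^*$ is injective on forms. Hence the only content of ``poly-symplectic'' left to verify is nondegeneracy, i.e.\ $\bigcap_{j=1}^k \Ker(\omega_{red,j}) = \{0\}$ on $TM_\zeta$.

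Next I would translate this nondegeneracy condition upstairs to $J^{-1}(\zeta)$. For $[X]\in T_{[x]}M_\zeta$ with $X\in T_x J^{-1}(\zeta)$, the relation $\Pi_\zeta^*\omega_{red}=i_\zeta^*\omega$ shows that $i_{[X]}\omega_{red}=0$ if and only if $i_X(i_\zeta^*\omega)=0$, i.e.\ $X\in \Ker(i_\zeta^*\omega)$. Since $\Ker(d\Pi_\zeta)=V_\zeta$, the reduced form $\omega_{red}$ is nondegenerate precisely when
\[
\Ker(i_\zeta^*\omega)\subseteq V_\zeta .
\]
Combined with the already-noted inclusion $V_\zeta\subseteq \Ker(i_\zeta^*\omega)$ of \eqref{eq:Vzeta}, nondegeneracy is equivalent to the equality $\Ker(i_\zeta^*\omega)=V_\zeta$.

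Finally I would invoke the identification established just before the statement,
\[
\Ker(i_\zeta^*\omega) \;=\; (S\cap \oplus_k \Ann(V))^\circ\cap TJ^{-1}(\zeta),
\]
and the definition $V_\zeta=V\cap TJ^{-1}(\zeta)$. Under these identifications, the equality $\Ker(i_\zeta^*\omega)=V_\zeta$ becomes precisely \eqref{eq:redpoly} (the reverse inclusion being automatic from \eqref{eq:Vzeta}), which is what we needed to prove.

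I do not anticipate a genuine obstacle: the only subtle point is to be careful that the descent of the closedness and the translation of nondegeneracy use the fact that $\Pi_\zeta$ is a surjective submersion with vertical bundle exactly $V_\zeta$, and that $\omega$ being $\mathbb{R}^k$-valued does not change any of this argument since everything can be checked componentwise.
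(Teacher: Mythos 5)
Your proposal is correct and follows essentially the same route as the paper: the paper likewise reduces the statement to the equality $\Ker(i_\zeta^*\omega)=V_\zeta$ (i.e.\ equality in \eqref{eq:Vzeta}) and then uses the identity $\Ker(i_\zeta^*\omega)=(S\cap \oplus_k \Ann(V))^\circ\cap TJ^{-1}(\zeta)$, obtained from $TJ^{-1}(\zeta)=\ker(dJ)=V^\omega$ and $(V^\omega)^\omega=(S\cap\oplus_k\Ann(V))^\circ$, established in the paragraphs preceding the proposition. No gaps.
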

A similar, but not equivalent, result of the previous condition was stated on
\cite[Lemma 3.16]{MR-RSV}. 
\begin{example}
Consider symplectic manifolds $(M_j,\w_j)_{j=1,\dots,k}$ each of them carrying a
Hamiltonian action of a Lie group $\G_j$ with respective moment map $J_j:M_j\to \LG_j^*$.
On the product $k$-poly-symplectic manifold $(M,\w)$ (see Section \ref{subsec:polysymp})
there is a poly-symplectic hamiltonian action given by the product action of
$\G:=\prod _{j=1}^k \G_j$ on $M$ and the moment map
$J:M\to \opk (\prod_{j=1}^k \LG_j^*)$, $J(m)=\oplus_{j=1}^k (0,\dots,0,J_j(m_j),0,\dots,0)$.

Let $\zeta=\oplus_{j=1}^k(0,\dots,0,\zeta_j,0,\dots,0)\in \opk
(\prod_{j=1}^k \LG_j^*)$ where $\zeta_j\in \LG_j^*$ is a clean value
for $J_j$. Then $J^{-1}(\zeta)=\prod_{j=1}^k J_j^{-1}(\zeta_j)$ and,
assuming that each $\G_{\zeta_j}$ acts freely and properly on
$J_j^{-1}(\zeta_j)$, then
$$M_\zeta:=J^{-1}(\zeta)/\G_\zeta=\prod_{j=1}^k J_j^{-1}(\zeta_j)/\G_{\zeta_j}=\prod_{j=1}^k M_{j,\zeta_j},$$
and the reduced $\mathbb{R}^k$-valued 2-form on $M_\zeta$ is the product $k$-poly-symplectic
form defined by the reduced symplectic forms on $M_{j,\zeta_j}$.

\end{example}

The moment-map reduction of Prop.~\ref{prop:MW} can now be compared
with the quotient of poly-Poisson structures in
Theorem~\ref{Thm:PPred}.

Assuming that the $\G$-action on $M$ is free and proper, and that
$\zeta$ is a clean value of a moment map $J: M\to \LG_{(k)}^*$, it
follows that the $\G_\zeta$-action on $J^{-1}(\zeta)$ is also free
and proper, and we have the following diagram of submersions and
natural inclusions:
\begin{equation}\label{Eq:Diagram projection}
\begin{diagram}
\node{J^{-1}(\zeta)} \arrow{e,t}{i_\zeta} \arrow{s,l}{\Pi_\zeta} \node{M} \arrow{s,r}{\Pi}\\
\node{M_\zeta} \arrow{e,t}{}\node{M/\G}.
\end{diagram}
\end{equation}


\begin{proposition}\label{Prp:Reduction}
Let $(M,\w)$ be a poly-symplectic manifold equipped with a
hamiltonian $\G$-action with moment map $J:M\to \LG^*_{(k)}$. Assume
that the $\G$-action on $M$ is free, proper and reducible
\eqref{Def:p.reductible}. If $\zeta\in \LG_{(k)}^*$ is a clean value
for the moment map, then:
\begin{itemize}
\item[(a)] The reduced manifold $M_\zeta=J^{-1}(\zeta)/\G_\zeta$ carries a natural
poly-symplectic form defined by equation \eqref{eq:wr};
\item[(b)] The poly-symplectic manifold $M_\zeta$ sits in $M/\G$ as a union of
poly-symplectic leaves of the reduced poly-Poisson manifold on
$M/\G$ (given by Thm.~\ref{Thm:PPred}).
\end{itemize}
\end{proposition}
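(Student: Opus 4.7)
The overall plan is to reduce part (a) directly to the criterion already established in Proposition~\ref{prop:MW}, and to prove part (b) by analyzing the natural map $\bar{\jmath}: M_\zeta \to M/\G$ coming from the diagram \eqref{Eq:Diagram projection}. For part (a), I would simply observe that condition (b) of the reducibility hypothesis \eqref{Def:p.reductible}, namely $(S\cap \opk \Ann(V))^\circ \subseteq V$, immediately implies condition \eqref{eq:redpoly}: any $X \in (S\cap \opk \Ann(V))^\circ \cap TJ^{-1}(\zeta)$ lies in $V\cap TJ^{-1}(\zeta) = V_\zeta$. Proposition~\ref{prop:MW} then yields that the form $\omega_{red}$ of \eqref{eq:wr} is poly-symplectic.

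For part (b), I would first check that $\bar{\jmath}$ is an injective immersion: injectivity follows from the equivariance condition (i) in \eqref{Def:HamPPaction}, which forces any $g\in \G$ with $g\cdot x, x\in J^{-1}(\zeta)$ to satisfy $\Ad_g^*\zeta=\zeta$, i.e.\ $g\in \G_\zeta$; the immersion property is a standard dimension count using $V_\zeta = V\cap TJ^{-1}(\zeta)$. Next, I would identify the tangent distribution of $\bar{\jmath}(M_\zeta)$ with that of the poly-symplectic foliation of $(S_{red}, P_{red})$. The key point is that condition (ii) in \eqref{Def:HamPPaction} gives $V_x^\omega = \ker(d_xJ) = T_xJ^{-1}(\zeta)$ for $x\in J^{-1}(\zeta)$; unwinding the definition \eqref{eq:compos} of $P_{red}$ shows that elements of $\mathrm{Im}(P_{red})|_{\Pi(x)}$ are precisely those of the form $d\Pi(X)$ with $X\in V^\omega_x$, so $\mathrm{Im}(P_{red})|_{\Pi(x)} = d\Pi(V^\omega_x) = d\bar{\jmath}(T_{[x]}M_\zeta)$. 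Hence $\bar{\jmath}(M_\zeta)$ is a union of poly-symplectic leaves.

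The final step, which I expect to be the main obstacle, is to verify that the leaf form $\omega_\mathcal{O}$ defined by \eqref{Eq:PSFol} pulls back under $\bar{\jmath}$ to $\omega_{red}$. For $X_1,X_2\in V^\omega_x = T_xJ^{-1}(\zeta)$, I would set $Y_i = d\Pi_\zeta(X_i)$ and choose $\bar{\eta}\in S_{red}$ with $P_{red}(\bar{\eta}) = d\Pi(X_1)$, so that under \eqref{eq:transpose} the lifted section is $\eta = i_{X_1}\omega$. Then the defining property of $\omega_\mathcal{O}$ yields $\omega_\mathcal{O}(d\Pi(X_1),d\Pi(X_2)) = \bar{\eta}(d\Pi(X_2)) = \eta(X_2) = \omega(X_1,X_2)$, which agrees with $\omega_{red}(Y_1,Y_2) = (i_\zeta^*\omega)(X_1,X_2)$. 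The delicate part will be tracking the identifications between $\bar{\eta}$, $\eta$, and $X_1$ across the chain of maps in \eqref{eq:compos} and \eqref{eq:transpose}; once this bookkeeping is handled, the equality follows automatically.
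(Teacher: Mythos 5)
Your proposal is correct and follows essentially the same route as the paper: part (a) via Proposition~\ref{prop:MW} and condition (b) of \eqref{Def:p.reductible}, and part (b) by identifying $TM_\zeta$ with $P_{red}(S_{red})$ through $TJ^{-1}(\zeta)=V^\omega=P_\omega(S_\omega\cap\oplus_k\Ann(V))$ and then comparing the leafwise form \eqref{Eq:PSFol} with $\w_{red}$ by lifting $\bar{\eta}\in S_{red}$ to $S_\w\cap\oplus_k\Ann(V)$. The ``bookkeeping'' you flag is exactly the point the paper handles by introducing the representative $X_0$ with $i_{X_0}\w=d\Pi_{(k)}^*\bar{\eta}$ and noting $d\Pi_\zeta(X_0)=d\Pi_\zeta(X)$; since $X_1\in V^\omega$ already gives $i_{X_1}\w\in\oplus_k\Ann(V)$, your choice of $\bar{\eta}$ with lift $i_{X_1}\w$ legitimately sidesteps this step.
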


\begin{proof}
Note that \eqref{Def:p.reductible}(b) directly implies
\eqref{eq:redpoly}, so the reduced form $\omega_{red}$ on $M_\zeta$ is
indeed poly-symplectic, proving part (a).

By the moment-map condition \ref{Def:HamPPaction}(ii), $X\in
\ker(dJ)$ if and only if $(i_X\w)(u_M)=0$ for all $u\in \LG$,
therefore
\begin{align}\label{Eq:TJ^-1(z)}
TJ^{-1}(\zeta)&=\{X\in TM| i_X\w\in \oplus_k \Ann(V) \}\\ \notag
&=P_\omega(S_\omega\cap \oplus_k \Ann(V))= P_\omega(d\Pi_{(k)}^*S_{red}).
\end{align}
It follows from \eqref{Eq:Diagram projection} and the construction of
the reduced poly-Poisson structure, see \eqref{eq:compos} and
\eqref{eq:Pr}, that
$$
TM_\zeta=d\Pi_\zeta(TJ^{-1}(\zeta))=d\Pi(P_\omega(d\Pi_{(k)}^*S_{red}))=P_{red}(S_{red}).
$$
Hence $M_\zeta$ is a union of poly-symplectic leaves in $M/\G$. It
remains to check that the poly-symplectic structures (the one coming
from reduction and the one induced from the poly-Poisson structure
on $M/\G$) agree.

Consider $\bar{X}=d\Pi_\zeta(X)$, $\bar{Y}=d\Pi_\zeta(Y) \in
TM_\zeta$, with $X$, $Y$ tangent to $J^{-1}(\zeta)$, and let us
compute the two 2-forms on them. For the leafwise poly-symplectic
form $\omega_L$, we have (see \eqref{Eq:PSFol})
$$
\w_L(\bar{X},\bar{Y}) = \bar{\eta}_{r}(\bar{Y})
=(\Pi_\zeta^*\w_L)(X,Y),
$$
where $\bar{\eta}_{r}$ is such that $\bar{X}=P_{red}(\bar{\eta}_{r})$.
Letting $\bar{\eta} = d\Pi_{(k)}^*(\bar{\eta}_{r}) \in S_\w\cap
\oplus_k \Ann(V),$ then
$$
(\Pi_\zeta^*\w_L)(X,Y)=\bar{\eta}_{r}(d\Pi_\zeta(Y))=\bar{\eta}(Y).
$$

Note that there exists a unique $X_0\in TM$ such that
$\bar{\eta}=i_{X_0}\w\in \oplus_k\Ann(V)$. By \eqref{Eq:TJ^-1(z)},
we know that $X_0\in TJ^{-1}(\zeta)$. Furthermore,
$$
d\Pi_\zeta (X_0)= d\Pi(P_\w(\bar{\eta})) = d\Pi(P_\w
(d\Pi_{(k)}^*(\bar{\eta}_{r})))=P_{red}(\bar{\eta}_{r}) = \bar{X},
$$
so $d\Pi_\zeta (X_0) = d\Pi_\zeta(X)$. Recalling that
$\Pi_\zeta^*\w_{red}=i_\zeta^*\w$, we see that
$$
(\Pi_\zeta^*\w_{red})(X,Y) = (\Pi_\zeta^*\w_{red})(X_0,Y) =(i_{X_0}\w)(Y)
=\bar{\eta}(Y)=(\Pi_\zeta^*\w_L)(X,Y),
$$
showing that $\w_{red}=\w_L$ on $M_\zeta$.
\end{proof}

\begin{example}\label{ex:redleaf}
\
\begin{itemize}
\item[(a)] Let us consider a $\G$-action on $Q$ and its lift to $M=\opk
T^*Q$ as in Example~\ref{ex:ppred}(a). The action on $M$ is
hamiltonian, and using the explicit formula for the moment map in
Example~\ref{Ex:MMexct.mfd} one sees that its poly-symplectic
reduction at $\zeta=0$ is $\oplus_{(k)} T^*(Q/\G)$, with the
poly-symplectic form of Example~\ref{Ex:polysymplectic}.
Proposition~\ref{Prp:Reduction}(b) realizes $\oplus_{(k)} T^*(Q/\G)$
as a poly-symplectic leaf of $M/\G$.

\item[(b)] Following Example~\ref{ex:ppred}(b), in the particular case $Q=\G$
Proposition~\ref{Prp:Reduction}(b) implies that the poly-symplectic
reduction of the lifted action on $\opk T^*\G$ at level $\zeta$ (see
\cite[Sec.~3.3.2]{MR-RSV}) is identified with the poly-symplectic
leaf of $\LG^*_{(k)}$ through $\zeta$, which is the orbit of $\zeta$
under the diagonal coadjoint action of $\G$ on $\LG^*_{(k)}$ (c.f.
Example~\ref{Ex:int.Lalgb_k}) equipped with a poly-symplectic
generalization of the usual KKS symplectic form on coadjoint orbits,
see \cite[Example~2.9]{IMV} and \cite[App.~A.3]{MR-RSV}.
\end{itemize}
\end{example}

\subsection{Reduction and integration}

In this section, we show (along the lines of \cite{BC2,FOR})
 how passing from poly-Poisson manifolds to poly-symplectic groupoids has the effect
of turning poly-Poisson actions into hamiltonian actions, and how
poly-symplectic reduction can be used in the construction of
poly-symplectic groupoids associated with poly-Poisson quotients.

In the remainder of this section, we will consider the following
set-up:

\begin{enumerate}[1.]
\item A $k$-poly-Poisson manifold $(M,S,P)$, so that its underlying
Lie algebroid is integrable, and $(\mathcal{G}\rightrightarrows
M,\w)$ the source-simply connected $k$-poly-symplectic groupoid
integrating it.

\item A poly-Poisson action $\varphi$ of the Lie group $\G$ on
$(M,S,P)$.
\end{enumerate}

Since $\varphi$ preserves the poly-Poisson structure on $M$, the
cotangent lift of $\varphi$ induces an action $ \hat{\varphi}:
\G\times S\to S $ by Lie-algebroid automorphisms, which can be
integrated to a poly-symplectic $\G$-action on $\mathcal{G}$,
denoted by
$$
\Phi: \G \times \mathcal{G} \to \mathcal{G}.
$$
We will now see that this action on $\mathcal{G}$ admits a natural
moment map (as in \eqref{Def:HamPPaction}), so it is hamiltonian.

Let us start by recalling that any action on $M$ induces a
Hamiltonian $\G$-action on the symplectic manifold $T^*M$ with
moment map $J_{can}:T^*M\to \LG^*$ given by
$$\langle
J_{can}(\alpha),u\rangle=\langle \alpha,u_M\rangle
$$
for all $\alpha\in T^*M$ and $u$ in the Lie algebra $\LG$ of $\G$.
We have an induced map $\opk T^*M \to \LG^*_{(k)}$, that we restrict
to $S$ to define
\begin{equation}\label{Fu:Jsj}
J^s:S \to \LG_{(k)} ^*.
\end{equation}
It is clear from the $\G$-equivariance of  $J_{can}$ that $J^s$ is
$\G$-equivariant (with respect to the diagonal coadjoint action on
$\LG_{(k)} ^*$).

The same proof as in \cite[Lemma~3.1]{BC2} shows that, viewing
$\LG_{(k)} ^*$ as a trivial Lie algebra, $J^s$ is a Lie-algebroid
morphism. According to our sign conventions, it is more convenient
to consider $-J^s$, which is also a Lie-algebroid morphism, and
integrate it to a Lie-groupoid morphism
\begin{equation}\label{eq:momG}
J: \mathcal{G}\to \LG_{(k)} ^*.
\end{equation}
Just as in \cite[Prop.~3.2]{BC2} one can verify that $J$ is
$\G$-equivariant and satisfies:
$$
i_{u_\mathcal{G}}\w = d\langle J,u \rangle,
$$
for all $u\in \LG$, where $u_\mathcal{G}$ is the infinitesimal
generator for the action on $\mathcal{G}$. In other words, $J$ is a
moment map for the action $\Phi$ on $\mathcal{G}$. The next result
summarizes the discussion:

\begin{proposition}
\label{Prp:MMgpd} The $\G$-action $\Phi$ on the poly-symplectic
groupoid $(\mathcal{G},\w)$ is Hamiltonian with moment map
\eqref{eq:momG}.
\end{proposition}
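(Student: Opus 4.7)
Most of the construction is already in place in the preceding discussion: the cotangent-lift $\hat\varphi$ acts on $S$ by Lie algebroid automorphisms and integrates to the $\G$-action $\Phi$ on the source-simply-connected $\mathcal{G}$; the infinitesimal morphism $-J^s\colon S\to \LG_{(k)}^*$ integrates to the Lie groupoid morphism $J\colon \mathcal{G}\to \LG_{(k)}^*$. What remains is to verify the two defining conditions \eqref{Def:HamPPaction} of a Hamiltonian moment map: the equivariance $J\circ \Phi_g=\Ad_g^*\circ J$ and the contraction identity $i_{u_{\mathcal{G}}}\w=d\langle J,u\rangle$, for all $u\in\LG$.

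For equivariance, I would argue at the infinitesimal level: a short computation from the definition $\langle J^s(\bar\eta),u\rangle=\bar\eta(u_M)$ shows that $-J^s$ intertwines $\hat\varphi$ on $S$ with the infinitesimal coadjoint action on $\LG_{(k)}^*$. Both $J\circ \Phi_g$ and $\Ad_g^*\circ J$ are then Lie groupoid morphisms $\mathcal{G}\to \LG_{(k)}^*$ integrating the same infinitesimal datum, so they must agree by Lie's second theorem applied to the source-simply-connected $\mathcal{G}$.

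The contraction identity is the main step. Both sides $\alpha:=i_{u_{\mathcal{G}}}\w$ and $\beta:=d\langle J,u\rangle$ are closed multiplicative $\mathbb{R}^k$-valued $1$-forms on $\mathcal{G}$: $\alpha$ is multiplicative because $u_\mathcal{G}$ is a multiplicative vector field (being the infinitesimal generator of an action by groupoid automorphisms) and $\w$ is multiplicative, and it is closed since $d\alpha=\Lie_{u_\mathcal{G}}\w=0$ by $\G$-invariance of $\w$; the form $\beta$ is exact and is multiplicative because $\langle J,u\rangle\colon \mathcal{G}\to (\mathbb{R},+)$ is a groupoid morphism. By the IM-form correspondence of \cite{BC1,BCWZ} (with $r=1$ and trivial real coefficients, applied componentwise to the $k$ components), it suffices to check that $\alpha$ and $\beta$ give rise to the same IM datum $S\to \mathbb{R}^k$.

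The check is a short computation via \eqref{eq:rel1}. For $v\in\Gamma(S)$ and $m\in M$, evaluating $i_{v^R}\beta$ at the unit $\epsilon(m)$ gives $\mu_\beta(v)(m)=\langle dJ_{\epsilon(m)}(v_m),u\rangle=-\langle J^s(v_m),u\rangle=-v_m(u_M|_m)$, the last sign coming from $J$ integrating $-J^s$. Similarly, writing $\mu\colon S\hookrightarrow \opk T^*M$ for the tautological inclusion (which is the IM datum of $\w$), and using $t_*u_\mathcal{G}=u_M\circ t$, one obtains $(i_{v^R}\alpha)_g=-(i_{u_\mathcal{G}}t^*\mu(v))_g=-v_{t(g)}(u_M|_{t(g)})$, so that $\mu_\alpha(v)(m)=-v_m(u_M|_m)$ as well. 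Both IM data agree, hence $\alpha=\beta$ on all of $\mathcal{G}$. The main pitfall is sign bookkeeping -- the convention to integrate $-J^s$ rather than $+J^s$ is precisely what aligns these signs -- together with the check that $i_{u_\mathcal{G}}\w$ really is multiplicative, which is exactly where one uses that $\Phi$ acts by groupoid automorphisms.
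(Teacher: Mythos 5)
Your proposal is correct and follows essentially the same route as the paper, which gives no self-contained argument but defers the two verifications (equivariance of $J$ and the identity $i_{u_\mathcal{G}}\w=d\langle J,u\rangle$) to \cite[Prop.~3.2]{BC2}; the argument there is precisely yours, namely that both sides are closed multiplicative $\mathbb{R}^k$-valued $1$-forms on the source-simply-connected groupoid with the same infinitesimal (IM) datum $v\mapsto -v(u_M)$, hence coincide. Your sign bookkeeping (the reason for integrating $-J^s$ rather than $J^s$) matches the paper's convention.
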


We now discuss the connection between integration and reduction. We
assume from now on that the $\G$-action $\varphi$ on $M$ is free,
proper and reducible \eqref{Def:p.reductible}. Then the action
$\Phi$ on $\mathcal{G}$ is also free and proper
\cite[Prop.~4.4]{FOR}. Let $(S_{red},P_{red})$ be the quotient poly-Poisson
structure on $M/G$.

\begin{theorem}\label{thm:main}
Let $0\in \LG^*_{(k)}$ be a clean value for the moment map
\eqref{eq:momG}. Then $\mathcal{G}_{red}= J^{-1}(0)/\G$ is a Lie
groupoid over $M/\G$, and the reduced form $\w_{red} \in
\Omega^2(\mathcal{G}_{red},\mathbb{R}^k)$ makes it into a
poly-symplectic groupoid integrating $(S_{red},P_{red})$.
\end{theorem}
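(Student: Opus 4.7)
The plan is to combine the poly-symplectic reduction of Prop.~\ref{Prp:Reduction} with the Lie-groupoid structure inherited from the fact that the moment map $J$ is a groupoid morphism, and then identify the resulting poly-Poisson structure on the base via the characterization in Theorem~\ref{Thm:integration}.

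First I would establish that $\mathcal{G}_{red}=J^{-1}(0)/\G$ is a Lie groupoid over $M/\G$. Since $-J^s:S\to \LG_{(k)}^*$ is a Lie-algebroid morphism into the trivial Lie algebra (and $\mathcal{G}$ is source-simply-connected), its integration \eqref{eq:momG} is a Lie-groupoid morphism into the abelian group $(\LG_{(k)}^*,+)$; in particular units map to $0$, so $J^{-1}(0)\subseteq \mathcal{G}$ is a wide Lie subgroupoid over $M$. By $\G$-equivariance of $\Phi$ and $J$, the free and proper action $\Phi$ restricts to a groupoid action on $J^{-1}(0)$ covering $\varphi$ on $M$, and the standard quotient construction yields a Lie groupoid $\mathcal{G}_{red}\rightrightarrows M/\G$ for which the projection $\Pi_0:J^{-1}(0)\to \mathcal{G}_{red}$ is a surjective-submersive groupoid morphism.

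Next I would apply Prop.~\ref{Prp:Reduction}(a) to the hamiltonian action on $(\mathcal{G},\w)$ (given by Prop.~\ref{Prp:MMgpd}); for this I need to verify the reducibility condition \eqref{Def:p.reductible} for $\Phi$, which follows by transport of the reducibility of $\varphi$ via the isomorphism between the Lie algebroid of $\mathcal{G}$ and $S$ combined with the identification of the vertical bundle of $\Phi$ with the right-invariant extension of the vertical bundle of $\varphi$, as in \cite[Prop.~4.4]{FOR}. This yields a poly-symplectic form $\w_{red}\in \Omega^2(\mathcal{G}_{red},\mathbb{R}^k)$ characterized by $\Pi_0^*\w_{red}=i_0^*\w$. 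Multiplicativity of $\w_{red}$ is then a formal consequence: $i_0^*\w$ is multiplicative because $\w$ is and $i_0$ is a groupoid morphism, and using that $\Pi_0\times \Pi_0:(J^{-1}(0))_{(2)}\to (\mathcal{G}_{red})_{(2)}$ is a surjective submersion one checks that $m^*\w_{red}-\pr_1^*\w_{red}-\pr_2^*\w_{red}$ pulls back to zero and therefore vanishes. Thus $(\mathcal{G}_{red},\w_{red})$ is a poly-symplectic groupoid.

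For the final claim, by the first part of Theorem~\ref{Thm:integration} the poly-symplectic groupoid $(\mathcal{G}_{red},\w_{red})$ induces a unique poly-Poisson structure $(S',P')$ on $M/\G$ for which $t_{red}:\mathcal{G}_{red}\to M/\G$ is a poly-Poisson morphism; I would identify $(S',P')$ with $(S_{red},P_{red})$. For this, observe that the IM poly-symplectic form associated with $\w_{red}$ is, by the explicit description preceding Theorem~\ref{Thm:integration}, the restriction of $\w_{red}^\flat$ to the Lie algebroid of $\mathcal{G}_{red}$; using the commutative square relating $t,t_{red},\Pi,\Pi_0$ and the moment-map equation $i_{u_\mathcal{G}}\w = d\langle J,u\rangle$, one sees that this IM form is precisely the inclusion of $S_{red}\subseteq \opk T^*(M/\G)$ (as in \eqref{eq:transpose}), so Lemma~\ref{lem:incl} identifies $(S',P')$ with $(S_{red},P_{red})$. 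Equivalently, one can verify directly from the diagram
\begin{equation*}
\Pi\circ t\circ i_0 = t_{red}\circ \Pi_0
\end{equation*}
that $t_{red}$ satisfies Def.~\ref{Def:mor} with respect to $(S_{red},P_{red})$, and invoke the uniqueness in Remark~\ref{rem:unique2}.

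The main obstacle is the last step, namely the precise identification of the poly-Poisson structure integrated by $(\mathcal{G}_{red},\w_{red})$ with the reduced structure $(S_{red},P_{red})$ produced by Theorem~\ref{Thm:PPred}: both constructions proceed via annihilators of the vertical bundle, but the former lives naturally on $\mathcal{G}$ while the latter lives on $M$, so matching them requires carefully tracking how $\opk\mathrm{Ann}(V)\cap S$ at the algebroid level transports through the reduction of $\mathcal{G}$.
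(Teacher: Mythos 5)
Your overall strategy is sound and your final step coincides with the paper's: both arguments hinge on establishing the identity $i_{\bar u^R}\w_{red}=t_{red}^*\bar u$ for $\bar u\in S_{red}$ and $u=d\Pi_{(k)}^*\bar u\in S\cap\opk\Ann(V)$, and then reading off from it that the IM form of $\w_{red}$ is the inclusion $S_{red}\hookrightarrow\opk T^*(M/\G)$, which identifies the integrated structure with $(S_{red},P_{red})$ via Lemma~\ref{lem:incl} and the uniqueness of Remark~\ref{rem:unique2}. The paper's treatment of the groupoid-theoretic part (that $J^{-1}(0)$ is a wide subgroupoid and $\mathcal{G}_{red}$ a Lie groupoid over $M/\G$ with Lie algebroid $(J^s)^{-1}(0)/\G=S_{red}$, and that $\w_{red}$ is multiplicative) is delegated to \cite[Lemma 5.1, Prop.\ 5.2]{BC2}, which matches your first and third paragraphs.

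Where you genuinely diverge is in how nondegeneracy of $\w_{red}$ is obtained. You route it through Prop.~\ref{Prp:Reduction}(a) applied to $(\mathcal{G},\w)$, which forces you to verify the reducibility condition \eqref{Def:p.reductible} for the lifted action $\Phi$. That is the weak link: your proposed justification identifies the vertical bundle of $\Phi$ with the right-invariant extension of $V\subseteq TM$, but this is not correct --- $\Phi$ acts by groupoid automorphisms, so its infinitesimal generators $u_\mathcal{G}$ are \emph{multiplicative} vector fields covering $u_M$, not right-invariant ones, and condition (b) of \eqref{Def:p.reductible} for $\Phi$ (which amounts to $((V_\mathcal{G})^\w)^\w\subseteq V_\mathcal{G}$ on all of $\mathcal{G}$, a nontrivial constraint when $k>1$) does not transport from the base in the way you indicate. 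The paper avoids this entirely: since $\w_{red}$ is multiplicative, Proposition~\ref{Prop:nondeg. IM-form} reduces its nondegeneracy to that of its IM form, which is the inclusion of $S_{red}$ and hence nondegenerate because $(S_{red},P_{red})$ is already known to be a poly-Poisson structure by Theorem~\ref{Thm:PPred} (where reducibility of $\varphi$ on $M$ was used). Note that you derive the identity $i_{\bar u^R}\w_{red}=t_{red}^*\bar u$ anyway in your last step, so the detour through Prop.~\ref{Prp:Reduction} is not only the shakiest part of your argument but also unnecessary; reorganizing to extract nondegeneracy from that identity closes the gap and recovers the paper's proof.
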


\begin{proof}
Let $V_M\subset TM$ be the vertical bundle with respect to the action on $M$. According with
\cite[Lemma 3.1]{BC2} and condition (a) on \eqref{Def:p.reductible} we conclude that
$(J^s)^{-1}(0)=S\cap \opk \Ann(V_M)$ is  Lie subalgebroid of $S$. The $\G$-invariance
allows us to construct, as in \cite[Prop. 4.3]{BC2}, the reduced Lie algebroid
$S_{red}=(J^s)^{-1}(0)/\G$ over $M/\G$.
Furthermore, the reduced Lie algebroid $S_{red}$ coincides the one defined by the reduced
poly-Poisson structure of Theorem \ref{Thm:PPred}.

If $0$ is a clean value for $J$, $J^{-1}(0)$ is  Lie subgroupoid
(see \cite[Lemma 5.1]{BC2}). Following the same lines of \cite[Prop.
5.2]{BC2}, we see that $\mathcal{G}_{red}= J^{-1}(0)/\G$ is a Lie
groupoid over $M/\G$, whose Lie algebroid is $(S_{red},P_{red})$,
and the quotient map $\Pi_0:J^{-1}(0)\to \gpd _{red}$ is a groupoid
morphism.

Let $\w_{red}$ be the reduced form on $\gpd _{red}$, characterized
by $\Pi_0^*\w_{red}=i_0^*\w$, where $i_0$ is the natural inclusion
of $J^{-1}(0)$ on $\gpd$. The second part of \cite[Prop. 5.2]{BC2}
allows us to conclude that $\w_{red}$ is multiplicative.

The fact that the quotient map $\Pi_0$ and the inclusion $i_0$ are groupoid morphism yields
$$
\Pi_0^*(i_{\bar{u}^R}\w_{red})=i_{u^R}\Pi_0^*\w_{red}=i_0^*(i_{u^R}\w)
$$
for any $\bar{u}\in S_{red}$ and $u=d\Pi_{(k)}^*\bar{u}\in S\cap
\opk \Ann(V_M)$, where $\bar{u}^R$ and $u^R$ are the respective
right-invariant vector fields on the correspondent Lie groupoid.
Moreover, if $t,t_0,t_{red}$ denote the target maps on the Lie
groupoids $\gpd$, $J^{-1}(0)$ and $\gpd _{red}$, respectively, we
have
$$
i_0^*(i_{u^R}\w)=i_0^*(t^*u)=t_0^*d\Pi_{(k)}^*\bar{u}=\Pi_0^*t^*_{red}\bar{u},
$$
which implies that $i_{\bar{u}^R}\w_{red}=t_{red}^*\bar{u}$. It
follows from Prop.~\ref{Prop:nondeg. IM-form} that $\w_{red}$ is
nondegenerate, so $(\mathcal{G}_{red},\w_{red})$ is a
poly-symplectic groupoid, and it integrates $(S_{red},P_{red})$.
\end{proof}

Theorem~\ref{thm:main} is a generalization of the following example.

\begin{example}
In Example \ref{Ex:trivialPP} we saw that $\opk
T^*Q\rightrightarrows Q$ is the poly-symplectic Lie groupoid
integrating the trivial $k$-poly-Poisson structure on $Q$. In this
case, for a free and proper $\G$-action on $Q$, the hamiltonian
action of Prop.~\ref{Prp:MMgpd} is the one induced by cotangent
lift, see Example~\ref{ex:ppred}(a). We conclude that the
poly-symplectic reduction in Theorem~\ref{thm:main} is $\opk
T^*(Q/\G)$, as in Example~\ref{ex:redleaf}, which is a presymplectic
groupoid integrating the trivial poly-Poisson structure on $Q/\G$.
\end{example}

\begin{example}
Recall that for a simply connected manifold $M$, the
$k$-poly-symplectic manifold $(M,\w)$, viewed as poly-Poisson
manifold, is integrated by the $s$-simply connected poly-symplectic
groupoid $M\times M \rightrightarrows M$ endowed with the
poly-symplectic form $t^*\w-s^*\w$, where $t,s$ are the natural
projections from $M\times M$ to $M$. If $(M,\w)$ is equipped with a
hamiltonian poly-symplectic action of the Lie group $\G$ and
$J_0:M\to \LG_{(k)}^*$ is its moment map, then the moment map
\eqref{eq:momG} for the hamiltonian action on the groupoid is
$J=t^*J_0-s^*J_0$. If the action on $M$ is free, proper, reducible
and $0\in \LG_{(k)}^*$ is a clean value for $J$, then the symplectic
groupoid $J^{-1}(0)/\G$ over $M/\G$ integrates the reduced
poly-Poisson structure $(S_{red},P_{red})$ induced by $(M,\w)$.
\end{example}


The poly- symplectic groupoid $\gpd _{red}$ in Theorem
\ref{thm:main} is not necessarily the source-simply connected Lie
groupoid integrating the reduced structure. This claim  is
illustrated on \cite[Example 4.8]{FOR} for the case $k=1$.

\begin{remark}
Rather than assuming that $0$ is a clean value of the moment map $J$
on $\mathcal{G}$, one can also proceed as in \cite[Prop.~5.3]{BC2}
and consider the source-simply-connected groupoid $\mathcal{G}_0$
integrating the Lie algebroid $(J^s)^{-1}(0)$. With the same
arguments as in \cite[Prop.~5.3]{BC2}, one can see that this Lie
groupoid is equipped with a $\G$-action and inherits a $\G$-basic
multiplicative 2-form $\omega_0\in
\Omega^2(\mathcal{G}_0,\mathbb{R}^k)$ from the natural map
$\mathcal{G}_0\to \mathcal{G}$, integrating the inclusion
$(J^s)^{-1}(0)\to S$. Then $\mathcal{G}_{0,red}=\mathcal{G}_0/\G$ is a
Lie groupoid over $M/\G$ and $\omega_0$ reduces to a poly-symplectic
form $\omega_{0,red}$ on $\mathcal{G}_{0,red}$ integrating the quotient
poly-Poisson structure $(S_{red},P_{red})$.
\end{remark}

Finally, previous remark allows us to conclude that reduced poly-Poisson structure
$(S_{red},P_{red})$ is integrable if the Lie algebroid $(S,P)$ is also integrable.

\end{document}